\documentclass[article,12pt]{elsarticle}

	
	\usepackage[margin=1.5in]{geometry}  
	\usepackage{graphicx}              
	\usepackage{amsmath, amssymb} 
	\usepackage{orcidlink}              
	\usepackage{amsfonts}
	\usepackage{amsthm} 
	\usepackage{colortbl}               
	\usepackage{framed}
	\usepackage{comment}
	\usepackage{url}
	
	\newtheorem{thm}{Theorem}[section]

	\newtheorem{dfn}[thm]{Definition}
	\newtheorem{rmk}[thm]{Remark}


	\makeatletter
\def\ps@pprintTitle{%
  \let\@oddhead\@empty
  \let\@evenhead\@empty
  \def\@oddfoot{\reset@font\hfil}
  \def\@evenfoot{\reset@font\hfil}
}
\makeatother
\begin{document}

\begin{frontmatter}

\title{\bf Inequalities involving Higher Degree Polynomial Functions in $\pi(x)$}

\author[affil]{\textsc{Subham De} \orcidlink{0009-0001-3265-4354}}

\address[affil]{Department of Mathematics, Indian Institute of Technology Delhi, India \footnote{email: subham581994@gmail.com}\footnote{Website: \url{www.sites.google.com/view/subhamde}}}

\begin{abstract}
\noindent The primary purpose of this article is to study the asymptotic and numerical estimates in detail for higher degree polynomials in $\pi(x)$ having a general expression of the form,
\begin{align*}
	P(\pi(x)) - \frac{e x}{\log x} Q(\pi(x/e)) + R(x)
\end{align*}
$P$, $Q$ and $R$ are arbitrarily chosen polynomials and $\pi(x)$ denotes the \textit{Prime Counting Function}. The proofs require specific order estimates involving $\pi(x)$ and the \textit{Second Chebyshev Function} $\psi(x)$, as well as the famous \textit{Prime Number Theorem} in addition to certain meromorphic properties of the \textit{Riemann Zeta Function} $\zeta(s)$ and results regarding its non-trivial zeros. A few generalizations of these concepts have also been discussed in detail towards the later stages of the paper, along with citing some important applications.
\end{abstract}

\begin{keyword}
Arithmetic Function, Second Chebyshev Function, Prime Counting Function, Prime Number Theorem, Error Estimates, Higher-Degree Polynomials, Weighted Sums, Logarithmic Weighted Sums.

\MSC[2020] Primary  11A41, 11A25, 11N05, 11N37, 11N56, Secondary 11M06, 11M26
\end{keyword}

\end{frontmatter}

\section{Introduction and Motivation}
The motivation for investigating the distribution of prime numbers over the real line $\mathbb{R}$ first reflected in the writings of famous mathematician \textit{Ramanujan}, as evident from his letters \cite[pp. xxiii-xxx , 349-353]{23} to one of the most prominent mathematicians of $20^{th}$ century, \textit{G. H. Hardy} during the months of Jan/Feb of $1913$, which are testaments to several strong assertions about \textit{prime numbers}, especaially the \textit{Prime Counting Function}, $\pi(x)$ \cite{8}.\par 
In the following years, Hardy himself analyzed some of thoose results \cite{24} \cite[pp. 234-238]{25}, and even wholeheartedly acknowledged them in many of his publications, one such notable result is the \textit{Prime Number Theorem} \cite{18}.\par 
\textit{Ramanujan} provided several inequalities regarding the behaviour and the asymptotic nature of $\pi(x)$. One of such relation can be found in the notebooks written by Ramanujan himself has the following claim.
\begin{thm}\label{thm5}
	(Ramanujan's Inequality \cite{1})  For $x$ sufficiently large, we shall have,
	\begin{align}\label{24}
		(\pi(x))^{2}<\frac{ex}{\log x}\pi\left(\frac{x}{e}\right)
	\end{align}
\end{thm}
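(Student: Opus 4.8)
The plan is to reduce the inequality to a precise comparison of asymptotic expansions, exploiting the fact that both sides of \eqref{24} share the same leading behaviour $x^2/(\log x)^2$ and that their discrepancy only surfaces several orders further down. I would begin from the Prime Number Theorem in the sharp form $\pi(x) = \mathrm{Li}(x) + E(x)$, where $\mathrm{Li}(x) = \int_2^x dt/\log t$ and $E(x)$ is the error term (unconditionally $E(x) = O(x\exp(-c\sqrt{\log x}))$, or $O(\sqrt{x}\log x)$ under the Riemann Hypothesis). Repeated integration by parts yields the finite asymptotic expansion
\begin{align*}
	\mathrm{Li}(x) = \frac{x}{\log x}\left(1 + \frac{1}{\log x} + \frac{2}{(\log x)^2} + \frac{6}{(\log x)^3} + \frac{24}{(\log x)^4} + O\!\left(\frac{1}{(\log x)^5}\right)\right),
\end{align*}
whose coefficients are the factorials $k!$, and this is the object I would actually manipulate.

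Writing $L = \log x$ and using $\log(x/e) = L - 1$, I would substitute this expansion into each side of \eqref{24}. Squaring the series for $\pi(x)$ produces $(\pi(x))^2 = \tfrac{x^2}{L^2}\bigl(1 + 2/L + 5/L^2 + 16/L^3 + 64/L^4 + \cdots\bigr)$. For the right-hand side I would first expand $\pi(x/e)$ in powers of $1/(L-1)$, then re-expand $1/(L-1)$ as a geometric series in $1/L$, and finally combine with the prefactor $ex/L$; the factor $x/e$ absorbs the $e$, leaving $\tfrac{x^2}{L(L-1)}$ times the re-expanded bracket. Carrying this through gives $\tfrac{ex}{L}\pi(x/e) = \tfrac{x^2}{L^2}\bigl(1 + 2/L + 5/L^2 + 16/L^3 + 65/L^4 + \cdots\bigr)$.

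The decisive observation is then immediate: the two bracketed series agree through the $1/L^3$ term, and the first point of departure is at order $1/L^4$, where the right-hand coefficient $65$ exceeds the left-hand coefficient $64$ by exactly one. Hence
\begin{align*}
	\frac{ex}{\log x}\,\pi\!\left(\frac{x}{e}\right) - (\pi(x))^2 = \frac{x^2}{(\log x)^6}\bigl(1 + o(1)\bigr),
\end{align*}
which is strictly positive for all sufficiently large $x$, establishing \eqref{24}.

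The main obstacle, and the reason the inequality is delicate, is precisely this fourfold cancellation of the leading coefficients: no crude bound on $\pi(x)$ will suffice, and one must compute the asymptotic coefficients \emph{exactly} up to the $1/L^4$ level on both sides. The accompanying technical point is error control: the truncation remainder from the asymptotic expansion, together with the cross term $2\,\mathrm{Li}(x)E(x)$ that arises when $E(x)$ is folded into the square on the left, must be shown to be $o(x^2/(\log x)^6)$. This is comfortable, since $E(x)$ decays faster than any fixed power of $\log x$ relative to $x$, but it does have to be verified so that the genuine $x^2/(\log x)^6$ surplus is not swamped by the remainders.
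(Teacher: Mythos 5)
Your proposal is correct, and the arithmetic at its heart checks out: squaring the five-term expansion of $\mathrm{Li}(x)$ gives the bracketed coefficients $1,2,5,16,64$, while re-expanding $\tfrac{ex}{\log x}\,\mathrm{Li}(x/e)$ in powers of $1/\log x$ gives $1,2,5,16,65$, so the right-hand side of \eqref{24} exceeds the left by $\tfrac{x^2}{(\log x)^6}\bigl(1+o(1)\bigr)$, and the unconditional de la Vall\'ee Poussin error $O\bigl(x\exp(-c\sqrt{\log x})\bigr)$ is indeed harmless at that scale, as you note. Be aware, though, that the paper itself contains no proof of this theorem: it is quoted as Ramanujan's result from \cite{1}, with the modern effective treatment credited to Dudek and Platt \cite{2}. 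What you have reconstructed is essentially the classical argument --- Ramanujan's own, as elaborated by Berndt and made explicit by Dudek--Platt --- rather than anything appearing in this article. The comparison is still instructive: your route is genuinely sharper than the machinery the paper uses for its own inequalities. The paper works from $\pi(x)=\psi(x)/\log x+O\left(x/\log^2 x\right)$ (Theorem \eqref{thm1}) together with $\psi(x)\sim x$, which resolves only the leading term $x^2/(\log x)^2$; that can never detect a discrepancy that first appears four orders down, at $x^2/(\log x)^6$, which is exactly why the full factorial expansion to five terms is unavoidable for \eqref{24}. For the same reason, the paper's later ``equivalence'' argument (Theorem \eqref{thm7}), which reasons only with dominant terms and $\approx$ relations, would not suffice as a proof of Ramanujan's inequality, whereas your computation does.
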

Worth mentioning that, Ramanujan indeed provided a simple, yet unique solution in support of his claim. \par 
One immediate question which may pop up inside the head of any Number Theorist is that, what is meant by the term \textbf{"large"}? Apparently, over many years and even recently, a huge amount of effort has been put up by eminent researchers from all over the world in order to study \textit{Ramanujan's Inequality}, and focusing on understanding the behaviour of $\pi(x)$ and any other Arithmetic Function associated to it. For example, it can be found in the work of \textit{Wheeler, Keiper, and Galway}, \textit{Hassani} \cite[Theorem 1.2]{22}. Later on thanks to \textit{Dudek} and \textit{Platt} \cite[Theorem 1.2]{2}, \textit{Mossinghoff} and \textit{Trudgian} \cite{27} and \textit{Axler} \cite{26}, it has been well established that, a large proportion of posiive reals $x$ falls under the category for which the inequality in fact is true.\par 
In recent years, some attempts have indeed been made in order to derive other versions analogous to the \textit{Ramanujan's Conjecture}. \textit{Hassani} \cite{16} came up with a generalization of \eqref{24} increasing the power upto $2^n$. Furthermore, he also studied \eqref{24} extensively for different cases \cite{22}, and eventually claimed that, the inequality \eqref{24} does in fact reverses if one can replace $e$ by some $\alpha$ satifying, $0<\alpha<e$, although it retains the same sign for every $\alpha \geq e$. In addition to providing several numerical justifications in support of his proposition, he also came up with a few inequalities using asymptotic relations involving $\pi(x)$ (cf. Theorem 2, 5 \cite{15}), one of which stated that, for large enough $x$,
\begin{align*}
	\left(\frac{\sqrt{e}x}{\log x}\right)^2 \pi(x/e)<(\pi(x))^3<\frac{e^2 x}{\log x}(\pi(x/e))^2
\end{align*}
This article serves as a humble tribute to arguably the most famous mathematician that there ever was, \textit{Srinivasa Ramanujan}, and his stellar work on $\pi(x)$, where we shall investigate certain higher degree polynomial functions in $\pi(x)$ for their asymptotic behaviour as compared to significantly higher values of $x$, along with numerical estimates in support of justifying each and every result thus obtained in this process. We shall primarily discuss two major results in this area, namely:
\begin{itemize}
	\item Cubic Polynomial Inequality
	\item Higher-Degree Polynomial Inequality
\end{itemize}
In addition to above, we shall further look into the prospect of exploring the similar characteristics of functions when it involves weighted sums and logarithmic weighted sums of $\pi(x)$ over small intervals. The two important results in this segment which we've proposed in this paper are:
\begin{itemize}
	\item Inequality involving Weighted Sums of $\pi(x)$
	\item Logarithmic Weighted Sum Inequality
\end{itemize}
Moreover, the later sections have been devoted towards attempting to generalize some of the proposed inequalities, as well as discussing about working with functions involving arbitrarily chosen polynomials in $\pi(x)$ in a more generalized framework.

Before getting into the intricate details of this article, let us provide a brief overview of some important concepts involving $\pi(x)$ and the \textit{Second Chebyshev Function} $\psi(x)$ \cite{7}, which will be required extensively for establishing each and every result in later sections.

\section{Important Derivations Regarding $\pi(x)$}
We recall the definition of the \textit{Prime Counting Function} \cite{20} \cite{24} , $\pi(x)$ to be the number of \textit{primes} less than or equal to $x\in \mathbb{R}_{>0}$. In addition to above, we further define the \textit{Second Chebyshev Function} $\psi(x)$ as follows.
\begin{dfn}\label{def1}
	For every $\textit{x}\geq0$ , 
	\begin{center}
		$\psi(x):=\sum\limits_{n\le x}\Lambda(n)$ ,
	\end{center}
	Where,  $ \Lambda(n)$ is the "\textit{Mangoldt Function}"  having the definition,
	\begin{eqnarray}
		\Lambda(n) 
		:=\left\{
		\begin{array}{cc}
			\log p\mbox{ }, &\mbox{ if }n=p^m,\mbox{ } p^m\leq x,\mbox{ }m\in \mathbb{N}\\
			0\mbox{ }, &\mbox{ otherwise }.
		\end{array}
		\right.
	\end{eqnarray} 
\end{dfn} 
Applying the meromorphic properties of the \textit{Riemann Zeta Function} $\zeta(s)$ \cite{3} and that, it's non-trivial zeros lies inside the critical strip, $0<\Re (s)<1$, it can further be commented that \cite[Lemma $(3.2)$]{6},
\begin{align}\label{5}
	\psi(x) = x + O\left(x \log^2 x\right)
\end{align}
A priori an application of the \textit{Prime Number Theorem} \cite[Theorem 2.2.1, pp. 4]{20} allows us to utilize \eqref{5} and obtain an estimate for $\pi(x)$ in terms of $\psi(x)$.
\begin{thm}\label{thm1}
	\begin{align}\label{6}
		\pi(x) = \frac{\psi(x)}{\log x} + O\left(\frac{x}{\log^2 x}\right)
	\end{align}
\end{thm}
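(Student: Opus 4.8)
The plan is to pass through the first Chebyshev function $\theta(x) = \sum_{p \le x} \log p$, which sits naturally between $\pi$ and $\psi$ and is the cleanest bridge here. First I would record the elementary comparison between $\psi$ and $\theta$: since $\psi(x) = \sum_{m \ge 1} \theta(x^{1/m})$ with only the terms $m \le \log_2 x$ nonvanishing, the Chebyshev-type bound $\theta(t) = O(t)$ gives $\psi(x) - \theta(x) = \sum_{m \ge 2}\theta(x^{1/m}) = O(\sqrt{x}\log x)$. Dividing by $\log x$, this discrepancy contributes only $O(\sqrt{x})$, which is comfortably absorbed into the target error $O(x/\log^2 x)$. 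Hence it suffices to prove $\pi(x) = \theta(x)/\log x + O(x/\log^2 x)$.

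Second, I would apply Abel (partial) summation to $\pi(x) = \sum_{p \le x} 1 = \sum_{p \le x}(\log p)\cdot \tfrac{1}{\log p}$, taking the summatory function of the coefficients $\log p$ to be $\theta$ and the smooth weight to be $f(t) = 1/\log t$, whose derivative is $f'(t) = -1/(t\log^2 t)$. Starting the summation at $t = 2$ (legitimate, since there are no primes below $2$ and this avoids the singularity of $1/\log t$ at $t = 1$), this yields
\[
\pi(x) = \frac{\theta(x)}{\log x} + \int_2^x \frac{\theta(t)}{t \log^2 t}\, dt .
\]
The leading term is exactly the one we want, so everything now reduces to estimating the integral.

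Third, I would insert $\theta(t) = O(t)$ to reduce the integral to $\int_2^x \frac{dt}{\log^2 t}$, and then show this is $O(x/\log^2 x)$ by splitting the range at $\sqrt{x}$: on $[2,\sqrt{x}]$ the integrand is $O(1)$, contributing $O(\sqrt{x})$, while on $[\sqrt{x},x]$ we have $\log t \ge \tfrac{1}{2}\log x$, so the integrand is $O(1/\log^2 x)$ and the contribution is $O(x/\log^2 x)$. Assembling the three steps, and replacing $\theta$ by $\psi$ via the comparison of the first paragraph, delivers the stated estimate.

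The \textbf{main obstacle} — indeed essentially the only nontrivial point — is making the integral estimate land precisely at order $x/\log^2 x$ rather than something weaker: the crude bound $1/\log^2 t \le 1/\log^2 2$ on the whole range only produces $O(x)$, so the split at $\sqrt{x}$ is what converts the estimate into the required shape. The remaining ingredients are routine: the elementary identity relating $\psi$ and $\theta$, and a Chebyshev-type bound $\theta(t) = O(t)$, which is already implied by the estimate \eqref{5} for $\psi(x)$ together with the trivial inequality $\theta \le \psi$.
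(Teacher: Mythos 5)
Your argument is correct in substance and essentially self-contained, which is more than the paper itself offers: the paper does not prove Theorem \ref{thm1} at all, but simply defers to \cite[Theorem (3.3)]{6}, so there is no internal proof to compare against. Your route --- Abel summation against $\theta$ to obtain the exact identity $\pi(x)=\theta(x)/\log x+\int_2^x \theta(t)/(t\log^2 t)\,dt$, the split of the integral at $\sqrt{x}$, and the elementary comparison $\psi(x)-\theta(x)=O(\sqrt{x}\log x)$ --- is the classical derivation, and all three steps are executed correctly. You are also right to single out the split at $\sqrt{x}$ as the only point of substance, since the uniform bound $1/\log^2 t\le 1/\log^2 2$ on the whole range gives merely $O(x)$.

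The one claim you must repair is the final sentence, that $\theta(t)=O(t)$ ``is already implied by the estimate \eqref{5}.'' Equation \eqref{5} as printed reads $\psi(x)=x+O(x\log^2 x)$, whose error term swallows the main term; combined with $\theta\le\psi$ it yields only $\theta(t)=O(t\log^2 t)$. That weaker bound is harmless in your first step (it still gives $(\psi(x)-\theta(x))/\log x=O(\sqrt{x}\log x)$, say), but it destroys the third: the integrand $\theta(t)/(t\log^2 t)$ becomes $O(1)$ and the integral only comes out as $O(x)$, not $O(x/\log^2 x)$. The fix is immediate: either invoke Chebyshev's elementary estimate $\theta(x)\le\psi(x)\ll x$ as an independent classical input (which is all you really need), or observe that \eqref{5} is evidently a misprint for $\psi(x)=x+O(\sqrt{x}\log^2 x)$ --- the form the paper itself uses inside the proofs of Theorems \ref{thm3} and \ref{thm4} --- which does give $\theta(t)\le\psi(t)=O(t)$. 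With either repair, your proof is complete.
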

Readers can refer to \cite[Theorem $(3.3)$]{6} in for a detailed solution of this result. We shall be thoroughly applying \eqref{6} in the next two sections in order to study some specific polynomials involving $\pi(x)$ and their weighted sums and more significantly observe their asymptotic behaviour corresponding to increasing values of $x$.
\section{Inequalities involving Polynomials in $\pi(x)$}
\subsection{Cubic Polynomial Inequality}

The statement is as follows.
\begin{thm}\label{thm6}
	Let us consider the cubic polynomial of \(\pi(x)\):
	\begin{align}\label{64}
		\mathcal{H}(x) := (\pi(x))^3 - \frac{3ex}{\log x} (\pi(x/e))^2 + \frac{3e^2 x}{(\log x)^2} \pi(x/e^2)
	\end{align}
	Given that \(\pi(x)\) is approximated by \(\frac{x}{\log x}\) with a known error term, we can hypothesize that,
	\begin{align}\label{9}
		\mathcal{H}(x) \approx O\left(\frac{x^3}{(\log x)^4}\right)
	\end{align}
	Furthermore, $\mathcal{H}(x)<0$ for sufficiently large values of $x$.
\end{thm}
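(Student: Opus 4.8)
The plan is to reduce the whole expression to the Prime Number Theorem in the refined form supplied by Theorem~\ref{thm1}, which gives $\pi(t) = \frac{t}{\log t} + O\!\left(\frac{t}{(\log t)^{2}}\right)$, and then to evaluate the three summands of $\mathcal{H}(x)$ separately as explicit functions of $\log x$. Writing $L = \log x$, the only structural subtlety is that the shifted arguments carry shifted logarithms, $\log(x/e) = L-1$ and $\log(x/e^{2}) = L-2$, so that $\pi(x/e) = \frac{x}{e(L-1)} + O\!\left(\frac{x}{L^{2}}\right)$ and $\pi(x/e^{2}) = \frac{x}{e^{2}(L-2)} + O\!\left(\frac{x}{L^{2}}\right)$. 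First I would substitute these three expansions into \eqref{64}.

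Next I would read off the order of magnitude of each term. Cubing gives $(\pi(x))^{3} = \frac{x^{3}}{L^{3}} + O\!\left(\frac{x^{3}}{L^{4}}\right)$; squaring and multiplying by $\frac{3ex}{L}$ gives $\frac{3ex}{L}(\pi(x/e))^{2} = \frac{3x^{3}}{eL(L-1)^{2}} + O\!\left(\frac{x^{3}}{L^{4}}\right) = \frac{3}{e}\cdot\frac{x^{3}}{L^{3}} + O\!\left(\frac{x^{3}}{L^{4}}\right)$; while the third summand is only $\frac{3e^{2}x}{L^{2}}\pi(x/e^{2}) = \frac{3x^{2}}{L^{2}(L-2)} + O\!\left(\frac{x^{2}}{L^{3}}\right) = O\!\left(\frac{x^{2}}{L^{3}}\right)$. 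The crucial observation is that this third summand is of strictly smaller order in $x$ and is therefore irrelevant to the leading behaviour; the genuine competition is between the first two summands, both of size $x^{3}/L^{3}$. Collecting them yields
\[
\mathcal{H}(x) = \left(1 - \frac{3}{e}\right)\frac{x^{3}}{(\log x)^{3}} + O\!\left(\frac{x^{3}}{(\log x)^{4}}\right),
\]
so that the order $\frac{x^{3}}{(\log x)^{4}}$ recorded in \eqref{9} appears precisely as the remainder of this expansion, the leading term itself carrying the explicit coefficient $1 - \tfrac{3}{e}$.

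The sign is then immediate. Since $e \approx 2.718 < 3$ we have $1 - \frac{3}{e} < 0$, so the main term $\left(1-\frac{3}{e}\right)\frac{x^{3}}{(\log x)^{3}}$ is negative and, being of strictly larger order than the $O\!\left(\frac{x^{3}}{(\log x)^{4}}\right)$ remainder, dominates it once $L$ is large. This forces $\mathcal{H}(x) < 0$ for all sufficiently large $x$, which is the assertion to be proved.

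I expect the main obstacle to be bookkeeping rather than anything conceptual: one has to keep every error term uniform and check that expanding $\frac{1}{(L-1)^{2}}$ and $\frac{1}{L-2}$ in powers of $1/L$ really does push every discrepancy down to order $\frac{x^{3}}{L^{4}}$ or smaller, so that none of the neglected pieces can reverse the sign of the $\left(1-\frac{3}{e}\right)\frac{x^{3}}{L^{3}}$ term. A secondary, more delicate task is to make ``sufficiently large'' explicit: pinning down a threshold $x_{0}$ would require the effective form of the error in Theorem~\ref{thm1} together with an estimate of how large $L$ must be before $\left(\frac{3}{e}-1\right)\frac{x^{3}}{L^{3}}$ exceeds the implied constant times $\frac{x^{3}}{L^{4}}$.
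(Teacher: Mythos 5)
Your argument is correct, and structurally it is the same route the paper takes: expand each summand of $\mathcal{H}(x)$ via Theorem \eqref{thm1}, track the shifted logarithms $\log(x/e)=\log x-1$ and $\log(x/e^{2})=\log x-2$, observe that the third summand is of lower order in $x$, and let the two terms of size $x^{3}/(\log x)^{3}$ fight it out. The only cosmetic difference is that you substitute $\psi(t)\sim t$ from the outset, working directly with $\pi(t)=t/\log t+O\left(t/(\log t)^{2}\right)$, whereas the paper carries $\psi$ through its equation \eqref{7} and substitutes at the end. The substantive point is that your handling of the final cancellation is more careful than the paper's own. In passing from \eqref{7} to \eqref{8} the paper asserts, in effect,
\begin{align*}
\frac{x^{3}}{(\log x)^{3}} - \frac{3x^{3}}{e(\log x-1)^{3}} + \frac{3x^{2}}{(\log x-2)^{3}}
= -\frac{3x^{3}}{e(\log x-1)^{3}} + O\left(\frac{x^{3}}{(\log x)^{4}}\right),
\end{align*}
silently absorbing the positive term $x^{3}/(\log x)^{3}$ into the error term; this step is not legitimate, since that term is certainly not $O\left(x^{3}/(\log x)^{4}\right)$. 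You instead retain both competing terms and arrive at the correct asymptotic
\begin{align*}
\mathcal{H}(x) = \left(1-\frac{3}{e}\right)\frac{x^{3}}{(\log x)^{3}} + O\left(\frac{x^{3}}{(\log x)^{4}}\right),
\end{align*}
so that the negativity rests honestly on $3/e>1$. A consequence you gloss diplomatically but should state plainly: your computation shows that \eqref{9} cannot be read literally, because $\mathcal{H}(x)$ has exact order $x^{3}/(\log x)^{3}$; the $O\left(x^{3}/(\log x)^{4}\right)$ is only the remainder after the main term. The paper's own data support your version: at $x=10^{18}$ one has $\left(3/e-1\right)x^{3}/(\log x)^{3}\approx 1.5\times 10^{48}$, matching the tabulated value $-1.99\times 10^{48}$ in Table \eqref{table 1} in order of magnitude (the gap is the next-order correction), whereas $x^{3}/(\log x)^{4}\approx 3\times 10^{47}$ is too small to account for it.
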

\begin{proof}
	A priori from the order estimate between $\pi(x)$ and $\psi(x)$ as defined in \eqref{6} (cf. \cite{6}), we compute the indivudual terms of $\mathcal{H}(x)$ as follows,
	\begin{align}\label{1}
		\pi(x/e) = \frac{\psi(x/e)}{\log (x/e)} + O\left(\frac{x/e}{(\log (x/e))^2}\right) = \frac{\psi(x/e)}{\log x - 1} + O\left(\frac{x/e}{(\log x - 1)^2}\right)
	\end{align}
	and,
	\begin{align}\label{12}
		\pi(x/e^2) = \frac{\psi(x/e^2)}{\log (x/e^2)} + O\left(\frac{x/e^2}{(\log (x/e^2))^2}\right) = \frac{\psi(x/e^2)}{\log x - 2} + O\left(\frac{x/e^2}{(\log x - 2)^2}\right)
	\end{align}
	Furthermore,
	\begin{align*}
		(\pi(x))^3 = \left( \frac{\psi(x)}{\log x} + O\left(\frac{x}{(\log x)^2}\right) \right)^3 
		= \frac{(\psi(x))^3}{(\log x)^3} + 3 \cdot \frac{(\psi(x))^2}{(\log x)^3} O\left(\frac{x}{(\log x)^2}\right)
	\end{align*}
	\begin{align}
		\hspace{100pt}	+ 3 \cdot \frac{\psi(x)}{(\log x)^3} \left( O\left(\frac{x}{(\log x)^2}\right) \right)^2 + \left( O\left(\frac{x}{(\log x)^2}\right) \right)^3
	\end{align}
	Further simplification yields,
	\begin{align}\label{2}
		(\pi(x))^3 = \frac{(\psi(x))^3}{(\log x)^3} + O\left(\frac{x^3}{(\log x)^4}\right)
	\end{align}
	Finally,
	\begin{align*}
		\frac{3ex}{\log x} (\pi(x/e))^2 = \frac{3ex}{\log x} \left( \frac{\psi(x/e)}{\log x - 1} + O\left(\frac{x/e}{(\log x - 1)^2}\right) \right)^2 
	\end{align*}
	\begin{align*}
		= \frac{3ex}{\log x} \left( \frac{(\psi(x/e))^2}{(\log x - 1)^2} + 2 \cdot \frac{\psi(x/e)}{(\log x - 1)^2} O\left(\frac{x/e}{(\log x - 1)^2}\right) + \left( O\left(\frac{x/e}{(\log x - 1)^2}\right) \right)^2 \right)
	\end{align*}
	\begin{align}\label{3}
		\hspace{200pt}= \frac{3ex (\psi(x/e))^2}{(\log x)(\log x - 1)^2} + O\left(\frac{x^3}{(\log x)^4}\right)
	\end{align}
	And,
	\begin{align*}
		\frac{3e^2 x}{(\log x)^2} \pi(x/e^2) = \frac{3e^2 x}{(\log x)^2} \left( \frac{\psi(x/e^2)}{\log x - 2} + O\left(\frac{x/e^2}{(\log x - 2)^2}\right) \right) 
	\end{align*}
	\begin{align}\label{4}
		\hspace{200pt}= \frac{3e^2 x \psi(x/e^2)}{(\log x)^2 (\log x - 2)} + O\left(\frac{x^3}{(\log x)^4}\right)
	\end{align}
	Combining all the terms \eqref{1}, \eqref{2}, \eqref{3} and \eqref{4}, we obtain,
	\begin{align*}
		\mathcal{H}(x) = \left( \frac{(\psi(x))^3}{(\log x)^3} + O\left(\frac{x^3}{(\log x)^4}\right) \right) - \left( \frac{3ex (\psi(x/e))^2}{(\log x)(\log x - 1)^2} + O\left(\frac{x^3}{(\log x)^4}\right) \right)
	\end{align*}
	\begin{align*}
		\hspace{200pt} + \left( \frac{3e^2 x \psi(x/e^2)}{(\log x)^2 (\log x - 2)} + O\left(\frac{x^3}{(\log x)^4}\right) \right)
	\end{align*}
	Considering the dominant terms and the contributions of each term separately as compared to the error term, we get,
	\begin{align}\label{7}
		\mathcal{H}(x) = \frac{(\psi(x))^3}{(\log x)^3} - \frac{3ex (\psi(x/e))^2}{(\log x)(\log x - 1)^2} + \frac{3e^2 x \psi(x/e^2)}{(\log x)^2 (\log x - 2)} + O\left(\frac{x^3}{(\log x)^4}\right)
	\end{align}
	Given the statement of the \textit{Prime Number Theorem} \cite{18}\cite{20}, \(\psi(x)\sim x\) as $x$ approaches $\infty$, thus we consider the dominant terms for sufficiently large \(x\). Hence, substituting \eqref{5} in \eqref{7},
	\begin{align*}
		\mathcal{H}(x) = \frac{x^3}{(\log x)^3} - \frac{3x^3 }{e(\log x -1)^3} + \frac{3x^2}{(\log x -2)^3} + O\left(\frac{x^3}{(\log x)^4}\right)
	\end{align*}
	\begin{align}\label{8}
		\hspace{50pt}= - \frac{3x^3 }{e(\log x -1)^3}+O\left(\frac{x^3}{(\log x)^4}\right)
	\end{align}
	Since, $\frac{3x^3 }{e(\log x -1)^3}>0$ for sufficiently large \(x\) ( observe that higher-order terms diminish as \(x\) grows ), the dominant term is thus negative. \par 
	In coclusion, for sufficiently large values of $x$, one shall have \eqref{9} to satisfy and, $\mathcal{H}(x)<0$.
\end{proof}
\begin{rmk}
	In other words, the \textbf{Cubic polynomial Inequality} can be reformulated as,
	\begin{align}\label{49}
		(\pi(x))^3 + \frac{3e^2 x}{(\log x)^2} \pi(x/e^2)<\frac{3ex}{\log x} (\pi(x/e))^2
	\end{align}
	for sufficiently large values of $x$.
\end{rmk}
\begin{figure}[hbt!]
	\centering
	\includegraphics[width=1.0\linewidth]{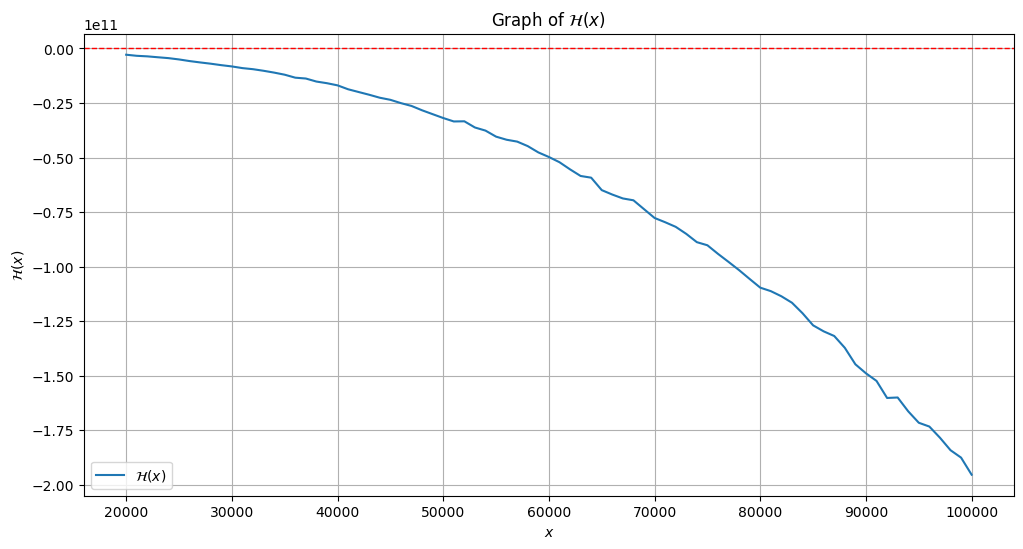}
	\caption{Graph of $\mathcal{H}(x)$}
	\label{fig1}
\end{figure}

\begin{table}[hbt!]
	\centering
	\begin{tabular}{|c|c|}
		\hline
		\rowcolor{gray}
		\textbf{\(x\)} & \textbf{\(\mathcal{H}(x)\)} \\ 
		\hline
		\(  \) & \(    \) \\ 
		
		\(10^4\) & \(-4.822952515086 \times 10^8\) \\ 
		
		\(10^5\) & \(-1.9535582364473376 \times 10^{11}\) \\ 
		
		\(10^6\) & \(-9.742665854621681 \times 10^{13}\) \\ 
		
		\(10^7\) & \(-5.373324095991878 \times 10^{16}\) \\ 
		
		\(10^8\) & \(-3.2776888213143585 \times 10^{19}\) \\ 
		
		\(10^9\) & \(-2.142500053569382 \times 10^{22}\) \\ 
		
		\(10^{10}\) & \(-1.4738226482632569 \times 10^{25}\) \\ 
		
		\(10^{11}\) & \(-1.0555737602257731 \times 10^{28}\) \\ 
		
		\(10^{12}\) & \(-7.810947114144009 \times 10^{30}\) \\ 
		
		\(10^{13}\) & \(-5.937547995444999 \times 10^{33}\) \\ 
		
		\(10^{14}\) & \(-4.6163278697477706 \times 10^{36}\) \\ 
		
		\(10^{15}\) & \(-3.65847701300371 \times 10^{39}\) \\ 
		
		\(10^{16}\) & \(-2.947501336471066 \times 10^{42}\) \\ 
		
		\(10^{17}\) & \(-2.4089115035201524 \times 10^{45}\) \\ 
		
		\(10^{18}\) & \(-1.9935903086211532 \times 10^{48}\) \\ 
		\(  \) & \(    \) \\
		\hline
	\end{tabular}
	\caption{Values of \(\mathcal{H}(x)\) for $10^4\leq x\leq 10^{18}$}
	\label{table 1}
\end{table}
\subsection{Numerical Estimates for $\mathcal{H}(x)$}
Important to note that, one can utilize \texttt{PYTHON} programming language\footnote[1]{Codes are available at: \url{https://github.com/subhamde1/Paper-12.git} \label{foot1}} in order to observe the plot of $\mathcal{H}(x)$ as compared to $x$. The following \textbf{Figure \eqref{fig1}} shows the graph for $2\times 10^4\leq x\leq 10^5$. Furthermore, rigorous computation using \texttt{MATHEMATICA} \footref{foot1} yields the following values of $\mathcal{H}(x)$ as mentioned in Table \eqref{table 1} in the range, $10^4\leq x\leq 10^{18}$. The data clearly suggests that, the function $\mathcal{H}(x)$ is indeed \textit{decreasing} in this interval, hence, our claim \eqref{49} can also be justified numerically.

\subsection{Application: Equivalence with Ramanujan's Inequality}
The one question which might pop up at this point is to justify the sugnificance of studying inequalities like \eqref{49} involving cubic polynomials in $\pi(x)$. One such application which we shall observe in this section is the equivalence of the statements of the \textbf{Cubic Polynomial Inequality} (cf. Theorem \eqref{thm6}) and the \textbf{Ramanujan's Inequality} (cf. Theorem \eqref{thm5}).\par 
Assume that, 
\begin{align}\label{47}
	\mathcal{G}(x):=(\pi(x))^{2}-\frac{ex}{\log x}\pi\left(\frac{x}{e}\right)
\end{align}
Hence, the statement goes as follows.
\begin{thm}\label{thm7}
	The Cubic Polynomial Inequality is \textbf{equivalent} to proving the Ramanujan's Inequality \cite{1}\cite{2}. In other words, if $\mathcal{H}(x)<0$ for large $x$, then, $\mathcal{G}(x)<0$ for sufficiently large $x$ and vice versa.
\end{thm}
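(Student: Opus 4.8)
The plan is to prove the equivalence not by establishing each inequality separately (which would make the stated two-way implication vacuous), but by exhibiting an asymptotic proportionality $\mathcal{H}(x) = c(x)\,\mathcal{G}(x)\,(1+o(1))$ in which the factor $c(x)$ is \emph{positive} for all large $x$. Once such a relation is in hand, the sign of either function forces the sign of the other, which is exactly the content ``$\mathcal{H}(x)<0$ for large $x$ if and only if $\mathcal{G}(x)<0$ for large $x$'' demanded by the statement.

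First I would run the computation of Theorem~\ref{thm6} one degree lower, deriving the asymptotic expansion of $\mathcal{G}(x)$ by the same substitution of Theorem~\ref{thm1}. Writing $\pi(x) = \frac{\psi(x)}{\log x} + O\!\left(\frac{x}{(\log x)^2}\right)$ and $\pi(x/e) = \frac{\psi(x/e)}{\log x - 1} + O\!\left(\frac{x/e}{(\log x - 1)^2}\right)$, squaring the first and multiplying the second by $\frac{ex}{\log x}$ gives
\begin{align*}
\mathcal{G}(x) = \frac{(\psi(x))^2}{(\log x)^2} - \frac{ex\,\psi(x/e)}{(\log x)(\log x - 1)} + O\!\left(\frac{x^2}{(\log x)^3}\right).
\end{align*}
Substituting $\psi(x)\sim x$ and $\psi(x/e)\sim x/e$ from the Prime Number Theorem, the two dominant pieces are $\frac{x^2}{(\log x)^2}$ and $\frac{x^2}{(\log x)(\log x - 1)}$, whose difference telescopes to
\begin{align*}
\mathcal{G}(x) \sim \frac{x^2}{(\log x)^2} - \frac{x^2}{(\log x)(\log x - 1)} = -\frac{x^2}{(\log x)^2(\log x - 1)},
\end{align*}
so the dominant term of $\mathcal{G}(x)$ is negative and of order $x^2/(\log x)^3$.

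Next I would place this beside the dominant term of $\mathcal{H}(x)$ extracted in Theorem~\ref{thm6}, namely the surviving combination $\left(1 - \frac{3}{e}\right)\frac{x^3}{(\log x)^3}$, which is negative since $1 - \frac{3}{e}<0$, of order $x^3/(\log x)^3$. Forming the quotient,
\begin{align*}
\frac{\mathcal{H}(x)}{\mathcal{G}(x)} \sim \frac{\left(1 - \frac{3}{e}\right)x^3/(\log x)^3}{-\,x^2/(\log x)^3} = \left(\tfrac{3}{e} - 1\right) x,
\end{align*}
and since $\frac{3}{e}-1>0$ this quotient is positive and tends to $+\infty$. Hence $\mathcal{H}(x)$ and $\mathcal{G}(x)$ carry the same sign for all sufficiently large $x$, which yields both implications simultaneously and completes the equivalence.

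The main obstacle is not the bookkeeping but the legitimacy of reading signs off dominant terms. For $\mathcal{G}(x)$ the two leading pieces of order $x^2/(\log x)^2$ cancel exactly, leaving a remainder of order $x^2/(\log x)^3$ that is the \emph{same} order as the error terms produced by Theorem~\ref{thm1}; a careless truncation could therefore swallow the very term that fixes the sign. To make the proportionality airtight one must retain enough secondary terms in the expansions of $\pi(x)$ and $\pi(x/e)$ — equivalently, invoke a Prime Number Theorem error sharper than the cancelled main order — so that the surviving $-\frac{x^2}{(\log x)^2(\log x-1)}$ genuinely dominates the aggregated error. Once that domination is secured for $\mathcal{G}$ (the analogous domination for $\mathcal{H}$ being already comfortable, as its leading order does not cancel), the positive factor $\left(\frac{3}{e}-1\right)x$ transfers the sign faithfully and the equivalence follows.
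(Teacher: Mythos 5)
Your treatment of $\mathcal{H}(x)$ is sound: since the leading coefficients do not cancel ($1-3/e\neq 0$), one genuinely has $\mathcal{H}(x)\sim\left(1-\tfrac{3}{e}\right)x^3/(\log x)^3<0$ (this even repairs the paper's own step \eqref{8}, where the term $x^3/(\log x)^3$ was absorbed into the error illegitimately). The genuine gap is in your asymptotic for $\mathcal{G}(x)$, and it is fatal to the ratio argument. The quantity $-\frac{x^2}{(\log x)^2(\log x-1)}$ is an artifact of truncating $\pi$ at its first term; it does not survive once the true lower-order terms of $\pi$ are kept. Writing $\pi(y)=\sum_{k=0}^{4}\frac{k!\,y}{(\log y)^{k+1}}+O\!\left(\frac{y}{(\log y)^{6}}\right)$ and expanding both pieces of $\mathcal{G}(x)$ in powers of $1/\log x$, one finds
\begin{align*}
(\pi(x))^2 &= \frac{x^2}{(\log x)^2}+\frac{2x^2}{(\log x)^3}+\frac{5x^2}{(\log x)^4}+\frac{16x^2}{(\log x)^5}+\frac{64x^2}{(\log x)^6}+\cdots\\
\frac{ex}{\log x}\,\pi(x/e) &= \frac{x^2}{(\log x)^2}+\frac{2x^2}{(\log x)^3}+\frac{5x^2}{(\log x)^4}+\frac{16x^2}{(\log x)^5}+\frac{65x^2}{(\log x)^6}+\cdots
\end{align*}
where in the second line the term $\frac{x^2}{(\log x)^3}$ arising from $\frac{1}{(\log x)(\log x-1)}=\frac{1}{(\log x)^2}+\frac{1}{(\log x)^3}+\cdots$ is exactly matched by the one coming from the second term of $\pi(x/e)$, and similarly at the next two orders. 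Thus the two expressions agree through order $x^2/(\log x)^5$: the coefficient of $x^2/(\log x)^3$ in $\mathcal{G}(x)$ is $2-2=0$, not $-1$ as you claim, and in fact $\mathcal{G}(x)\sim -x^2/(\log x)^6$. Consequently your proposed repair --- retain more secondary terms so that $-\frac{x^2}{(\log x)^2(\log x-1)}$ dominates the aggregated error --- cannot work: retaining those terms is precisely what annihilates that quantity.

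This misdiagnosis matters because determining the sign of $\mathcal{G}(x)$ at order $x^2/(\log x)^6$ is exactly Ramanujan's inequality; it requires the expansion of $\pi$ to six terms together with a Prime Number Theorem error term stronger than $O\!\left(x/(\log x)^{7}\right)$, which is the substance of Ramanujan's own argument and of Dudek--Platt \cite{2}. Nothing available in the paper can see this: Theorem \eqref{thm1} carries an error $O\!\left(x/(\log x)^2\right)$, and \eqref{5} as stated is even weaker, so every term of order $x^2/(\log x)^3$ and below is swallowed. Once the correct asymptotics are assumed, your sign-transfer does go through (the ratio is $\sim\left(\tfrac{3}{e}-1\right)x(\log x)^3>0$, not $\left(\tfrac{3}{e}-1\right)x$), but at that point both inequalities have been proven outright and the stated equivalence is vacuously true rather than a transfer of one statement to the other. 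For comparison, the paper's own proof instead attempts a direct two-way implication by dominant-term heuristics in $\psi$; it is likewise not rigorous, but your route, as written, rests on a concrete false asymptotic and so fails at an identifiable step.
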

\begin{proof}
	A priori from \eqref{6} of Theorem \eqref{thm1}, we attain the derivations \eqref{1} and \eqref{12}.
	
	First, we approximate \(\mathcal{H}(x)\),
	\begin{align*}
		\mathcal{H}(x) = \left(\frac{\psi(x)}{\log x}\right)^3 - \frac{3ex}{\log x} \left(\frac{\psi(x/e)}{\log x - 1}\right)^2 + \frac{3e^2 x}{(\log x)^2} \left(\frac{\psi(x/e^2)}{\log x - 2}\right)
	\end{align*}
	Ignoring higher-order error terms. Estimating  \(\mathcal{G}(x)\) in similar manner, we obtain,
	\begin{align*}
		\mathcal{G}(x) = \frac{(\psi(x))^2}{(\log x)^2} - \frac{ex}{\log x} \frac{\psi(x/e)}{\log x - 1}
	\end{align*}
	First we assume , if possible that, $\mathcal{H}(x)<0$ for sufficiently large values of $x$.
	Given that \(\frac{\psi(x)}{\log x}\) is the dominant term, for large \(x\), thus the second term in the expression of $\mathcal{H}(x)$ will dominate the first and third terms due to the \(e x\) factor in the numerator. Hence, to maintain the inequality, we must have,
	\begin{align*}
		\frac{(\psi(x))^3}{(\log x)^3} \approx \frac{3ex}{\log x} \frac{(\psi(x/e))^2}{(\log x - 1)^2}
	\end{align*} 
	Implying,
	\begin{align}
		(\psi(x))^3 \approx 3ex (\psi(x/e))^2 (\log x) (\log x - 1)^2
	\end{align}
	Dividing both sides by \((\psi(x))^2\),
	\begin{align*}
		\psi(x) \approx 3ex (\log x) (\log x - 1)^2
	\end{align*}
	\textbf{N.B.} Since \(\psi(x)\) is much larger than \(\psi(x/e)\) for large \(x\), this approximation holds.\par 
	As for \(\mathcal{G}(x)\), again, given the dominance of \(\psi(x)\),
	\begin{align}
		\frac{(\psi(x))^2}{(\log x)^2} \approx \frac{ex}{\log x} \frac{\psi(x/e)}{\log x - 1}
	\end{align}
	Observe that, the leading term in \(\mathcal{G}(x)\) is negative, implying \(\mathcal{G}(x) < 0\).
	
	Conversely, consider that, \(\mathcal{G}(x) < 0\). This implies,
	\begin{align}
		\left(\frac{\psi(x)}{\log x}\right)^2 < \frac{ex}{\log x} \left(\frac{\psi(x/e)}{\log x - 1}\right)
	\end{align}
	Dividing both sides by \(\left(\frac{\psi(x/e)}{\log x - 1}\right)\), we get,
	\begin{align*}
		\frac{\psi(x)}{\log x} < \frac{ex (\psi(x/e))}{(\log x - 1)}
	\end{align*}
	Evaluate \(\mathcal{H}(x)\),
	\begin{align*}
		\mathcal{H}(x) := \left(\frac{\psi(x)}{\log x}\right)^3 - \frac{3ex}{\log x} \left(\frac{(\psi(x/e))^2}{(\log x - 1)^2}\right) + \frac{3e^2 x}{(\log x)^2} \left(\frac{\psi(x/e^2)}{\log x - 2}\right)
	\end{align*}
	Given the dominance of \(\psi(x)\), we can assert that,
	\begin{align}
		\left(\frac{\psi(x)}{\log x}\right)^3 < \frac{3ex}{\log x} \left(\frac{(\psi(x/e))^2}{(\log x - 1)^2}\right)
	\end{align}
	Which simplifies to,
	\begin{align}
		\left(\frac{\psi(x)}{\log x}\right)^3 \approx \frac{3ex (\psi(x/e))^2}{(\log x - 1)^2}
	\end{align}
	Dividing both sides by \(\left(\frac{\psi(x)}{\log x}\right)\),
	\begin{align}\label{48}
		\left(\frac{\psi(x)}{\log x}\right)^2 \approx 3ex (\psi(x/e)) (\log x) (\log x - 1)
	\end{align}
	The dominant term in \eqref{48} indicates that the inequality \(\mathcal{H}(x) < 0\) holds true for large enough $x$. This completes the proof.
	
\end{proof}
\subsection{Higher-Degree Polynomial Inequality} 

\begin{thm}\label{thm2}
	For higher powers, let's consider,
	\begin{align}\label{10}
		\mathcal{K}(x) := (\pi(x))^4 - \frac{4ex}{\log x} (\pi(x/e))^3 + \frac{6e^2 x}{(\log x)^2} (\pi(x/e^2))^2 - \frac{4e^3 x}{(\log x)^3} \pi(x/e^3)
	\end{align}
	Then, the following holds true,
	\begin{align}\label{11}
		\mathcal{K}(x) \approx O\left(\frac{x^4}{(\log x)^5}\right)
	\end{align}
	and for sufficiently large $x$ we have, $\mathcal{K}(x)>0$.
\end{thm}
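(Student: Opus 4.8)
The plan is to follow the argument of Theorem~\ref{thm6}, now carried one binomial degree higher. First I would use \eqref{6} to write, for each $k=0,1,2,3$,
\begin{equation*}
\pi(x/e^{k})=\frac{\psi(x/e^{k})}{\log x-k}+O\!\left(\frac{x/e^{k}}{(\log x-k)^{2}}\right),
\end{equation*}
exactly as in \eqref{1} and \eqref{12}, and then expand the nontrivial powers $(\pi(x))^{4}$, $(\pi(x/e))^{3}$ and $(\pi(x/e^{2}))^{2}$ by the binomial theorem, retaining in each only the top power of the $\psi$-term and pushing every product carrying at least one error factor into a remainder. For the quartic the dominant discarded cross term is $4\left(\frac{\psi(x)}{\log x}\right)^{3}O\!\left(\frac{x}{(\log x)^{2}}\right)=O\!\left(\frac{x^{4}}{(\log x)^{5}}\right)$, which fixes the error scale and is exactly the remainder $O\!\left(x^{4}/(\log x)^{5}\right)$ named in \eqref{11}.

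Next I would compare the orders in $x$ of the four surviving main pieces after substituting $\psi(x/e^{k})\sim x/e^{k}$. The pieces from $k=0$ and $k=1$ are both of order $x^{4}/(\log x)^{4}$, while the $k=2$ piece is of order $x^{3}/(\log x)^{4}$ and the $k=3$ piece of order $x^{2}/(\log x)^{4}$. Since $x^{3}/(\log x)^{4}=o\!\left(x^{4}/(\log x)^{5}\right)$, the last two are dominated by the remainder and may be absorbed into it, leaving
\begin{equation*}
\mathcal{K}(x)=\frac{(\psi(x))^{4}}{(\log x)^{4}}-\frac{4ex(\psi(x/e))^{3}}{(\log x)(\log x-1)^{3}}+O\!\left(\frac{x^{4}}{(\log x)^{5}}\right),
\end{equation*}
where the error term is precisely \eqref{11}, read (just as \eqref{8} is read in Theorem~\ref{thm6}) as the remainder accompanying an explicit dominant part.

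Finally I would insert the Prime Number Theorem $\psi(y)\sim y$ together with $(\log x-1)^{-3}=(\log x)^{-3}\left(1+O(1/\log x)\right)$, which turns the second term into $\frac{4}{e^{2}}\cdot\frac{x^{4}}{(\log x)^{4}}+O\!\left(\frac{x^{4}}{(\log x)^{5}}\right)$. Collecting the two $x^{4}/(\log x)^{4}$ contributions gives
\begin{equation*}
\mathcal{K}(x)=\left(1-\frac{4}{e^{2}}\right)\frac{x^{4}}{(\log x)^{4}}+O\!\left(\frac{x^{4}}{(\log x)^{5}}\right).
\end{equation*}
Because $4/e^{2}\approx0.541<1$, the coefficient $1-4/e^{2}$ is strictly positive; as the remainder is of smaller order, the whole expression is eventually positive and $\mathcal{K}(x)>0$ for all sufficiently large $x$.

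The step I expect to be the main obstacle is controlling the interplay between the two assertions. In the cubic case the constant $1-3/e$ was negative and forced $\mathcal{H}(x)<0$; here the surviving constant $1-4/e^{2}$ is positive, so the sign is decided by this algebraic number and the real work is to show rigorously that none of the subordinate contributions — the $k=2,3$ pieces, the logarithmic corrections produced by the denominators $(\log x-k)^{-m}$, and the expansion remainders — can reverse it, i.e.\ that each is genuinely $o\!\left(x^{4}/(\log x)^{4}\right)$. I would also make explicit that the extracted dominant part $(1-4/e^{2})x^{4}/(\log x)^{4}$ is itself of order $x^{4}/(\log x)^{4}$, so that \eqref{11} is to be understood — exactly as \eqref{8} is in the proof of Theorem~\ref{thm6} — as the order of the remainder left once this dominant part is removed, rather than as a bound on $\mathcal{K}(x)$ itself. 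With that reading both claims hold together: the remainder is $O\!\left(x^{4}/(\log x)^{5}\right)$ and the extracted main term is positive.
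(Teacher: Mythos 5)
Your proposal is correct, and it follows the same overall strategy as the paper: substitute \eqref{6} into each factor $\pi(x/e^k)$, $k=0,1,2,3$, expand the powers keeping only the top $\psi$-term, absorb every cross term into an $O\!\left(x^4/(\log x)^5\right)$ remainder, and finish with $\psi(y)\sim y$. Up to the paper's equation \eqref{18} the two arguments coincide essentially line by line. They part ways at the final substitution, and here your version is actually the more careful one. The paper's \eqref{19} asserts that \eqref{18} collapses to $\frac{x^4}{(\log x)^4}\bigl(1-\frac{4e}{\log x}+\frac{6e^2}{(\log x)^2}-\frac{4e^3}{(\log x)^3}\bigr)+O\!\left(\frac{x^4}{(\log x)^5}\right)$, but that bracket does not follow from \eqref{18}: substituting $\psi(x/e^k)\sim x/e^k$ gives $\frac{4ex(x/e)^3}{(\log x)(\log x-1)^3}=\frac{4}{e^2}\cdot\frac{x^4}{(\log x)^4}\bigl(1+O(1/\log x)\bigr)$ for the second term, while the third and fourth terms are $O\!\left(x^3/(\log x)^4\right)$ and $O\!\left(x^2/(\log x)^4\right)$ --- smaller by powers of $x$, not by powers of $\log x$ as \eqref{19} suggests. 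Your absorption of the $k=2,3$ pieces into the remainder, justified by $x^3/(\log x)^4=o\!\left(x^4/(\log x)^5\right)$, and your resulting main term $\bigl(1-\frac{4}{e^2}\bigr)\frac{x^4}{(\log x)^4}$ with $4/e^2\approx 0.5413<1$ constitute the correct computation. Since both the paper's bracket and your coefficient are eventually positive, the theorem's two conclusions survive either way, but your constant is the right one: at $x=10^{17}$ one has $\bigl(1-4/e^2\bigr)x^4/(\log x)^4\approx 1.95\times 10^{61}$, in close agreement with the tabulated value $2.03\times 10^{61}$ of $\mathcal{K}(x)$, whereas the leading coefficient $1$ implicit in \eqref{19} would predict roughly $4.3\times 10^{61}$. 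Your closing observation about how \eqref{11} must be read --- as the order of the remainder once the dominant part $\asymp x^4/(\log x)^4$ is extracted, not as a bound on $\mathcal{K}(x)$ itself --- is likewise the only reading under which the statement is coherent, and it matches how \eqref{8} functions in the proof of Theorem \ref{thm6}.
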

\begin{proof}
	A priori using the relation \eqref{6} (cf. \cite{6}) from Theorem \eqref{thm1}, along with \eqref{1} and \eqref{12}, 
	\begin{align}\label{13}
		\pi(x/e^3) = \frac{\psi(x/e^3)}{\log x - 3} + O\left(\frac{x/e^3}{(\log x - 3)^2}\right)
	\end{align}
	Now, we compute,
	\begin{align}\label{14}
		(\pi(x))^4 = \left( \frac{\psi(x)}{\log x} + O\left(\frac{x}{(\log x)^2}\right) \right)^4= \frac{(\psi(x))^4}{(\log x)^4} + O\left(\frac{x^4}{(\log x)^5}\right)
	\end{align}
	Moreover,
	\begin{align*}
		\frac{4ex}{\log x} (\pi(x/e))^3 = \frac{4ex}{\log x} \left( \frac{\psi(x/e)}{\log x - 1} + O\left(\frac{x/e}{(\log x - 1)^2}\right) \right)^3
	\end{align*}
	\begin{align}\label{15}
		\hspace{100pt}= \frac{4ex (\psi(x/e))^3}{(\log x)(\log x - 1)^3} + O\left(\frac{x^4}{(\log x)^5}\right)
	\end{align}
	Subsequently, we approximate the rest of the terms of $\mathcal{K}(x)$ as follows.
	\begin{align*}
		\frac{6e^2 x}{(\log x)^2} (\pi(x/e^2))^2 = \frac{6e^2 x}{(\log x)^2} \left( \frac{\psi(x/e^2)}{\log x - 2} + O\left(\frac{x/e^2}{(\log x - 2)^2}\right) \right)^2
	\end{align*}
	\begin{align}\label{16}
		\hspace{150pt}= \frac{6e^2 x (\psi(x/e^2))^2}{(\log x)^2 (\log x - 2)^2} + O\left(\frac{x^4}{(\log x)^5}\right)
	\end{align}
	And,
	\begin{align*}
		\frac{4e^3 x}{(\log x)^3} \pi(x/e^3) = \frac{4e^3 x}{(\log x)^3} \left( \frac{\psi(x/e^3)}{\log x - 3} + O\left(\frac{x/e^3}{(\log x - 3)^2}\right) \right)
	\end{align*}
	\begin{align}\label{17}
		\hspace{200pt}= \frac{4e^3 x \psi(x/e^3)}{(\log x)^3 (\log x - 3)} + O\left(\frac{x^4}{(\log x)^5}\right)
	\end{align}
	Combining \eqref{13}, \eqref{14}, \eqref{15}, \eqref{16} and \eqref{17}, and sorting out the dominant terms and their contributions towards the error term,
	\begin{align*}
		\mathcal{K}(x) = \frac{(\psi(x))^4}{(\log x)^4} - \frac{4ex (\psi(x/e))^3}{(\log x)(\log x - 1)^3} + \frac{6e^2 x (\psi(x/e^2))^2}{(\log x)^2 (\log x - 2)^2} 
	\end{align*}
	\begin{align}\label{18}
		\hspace{150pt}- \frac{4e^3 x \psi(x/e^3)}{(\log x)^3 (\log x - 3)} + O\left(\frac{x^4}{(\log x)^5}\right)
	\end{align}
	A simple application of \eqref{5} yields,
	\begin{align}\label{19}
		\mathcal{K}(x) = \frac{x^4}{(\log x)^4} \left( 1 - \frac{4e}{\log x} + \frac{6e^2}{(\log x)^2} - \frac{4e^3}{(\log x)^3} \right) + O\left(\frac{x^4}{(\log x)^5}\right)
	\end{align}
	
	Since $\left(1 - \frac{4e}{\log x} + \frac{6e^2}{(\log x)^2} - \frac{4e^3}{(\log x)^3}\right)$ is positive for sufficiently large \(x\) (higher-order terms diminish as \(x\) grows), hence the dominant term is positive. Accordingly, the error term in the approximation is,
	\begin{align*}
		O\left(\frac{x^4}{(\log x)^5}\right)
	\end{align*}
	In conclusion, we assert that, \eqref{11} indeed holds true, and $\mathcal{K}(x)>0$ for sufficiently large enough $x$.
\end{proof}
\begin{rmk}
	We can also rephrase the result obtained from Theorem \eqref{thm2} in the form,
	\begin{align}\label{50}
		\frac{4ex}{\log x} (\pi(x/e))^3 + \frac{4e^3 x}{(\log x)^3} \pi(x/e^3)<(\pi(x))^4+ \frac{6e^2 x}{(\log x)^2} (\pi(x/e^2))^2
	\end{align}
	for sufficiently large values of $x$.
\end{rmk}
\subsection{Numerical Estimates for $\mathcal{K}(x)$}
Important to observe that, one can apply \texttt{PYTHON} programming language \footref{foot1} in order to observe the plot of $\mathcal{K}(x)$ as compared to $x$. The following \textbf{Figure \eqref{fig2}} shows the plot for $2\times 10^4\leq x\leq 10^5$. Moreover, the following values of $\mathcal{H}(x)$ can in fact be calculated using \texttt{MATHEMATICA} \footref{foot1}, as evident from Table \eqref{table 2} in the range, $10^4\leq x\leq 10^{17}$. Using the data one can clearly infer that, $\mathcal{K}(x)$ is indeed \textit{increasing} in this interval, hence, our claim \eqref{50} can be established numerically.

\begin{figure}[hbt!]
	\centering
	\includegraphics[width=1.0\linewidth]{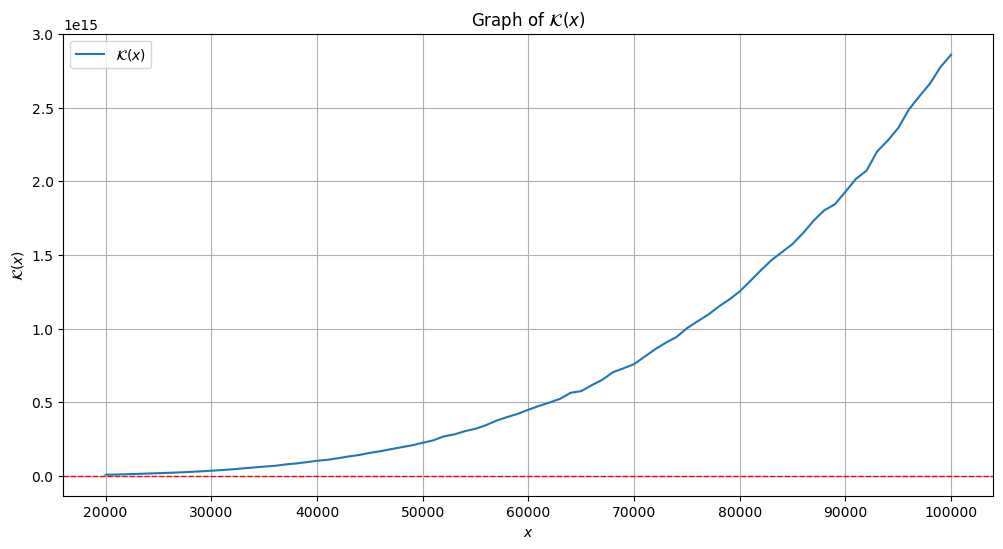}
	\caption{Graph of $\mathcal{K}(x)$}
	\label{fig2}
\end{figure}

\begin{table}[hbt!]
	\centering
	\begin{tabular}{|c|c|}
		\hline
		\rowcolor{gray}
		\textbf{\(x\)} & \textbf{\(\mathcal{K}(x)\)} \\ 
		\hline
		\(  \) & \(    \) \\ 
		
		\(10^4\)         & \(6.785501979995337 \times 10^{11}\)        \\ 
		
		\(10^5\)         & \(2.858713229490609 \times 10^{15}\)        \\ 
		
		\(10^6\)         & \(1.3657430631495643 \times 10^{19}\)       \\ 
		
		\(10^7\)         & \(7.37684110441765 \times 10^{22}\)         \\ 
		
		\(10^8\)         & \(4.2993020901898284 \times 10^{26}\)       \\ 
		
		\(10^9\)         & \(2.6664968326322003 \times 10^{30}\)       \\ 
		
		\(10^{10}\)      & \(1.7394264262779463 \times 10^{34}\)       \\ 
		
		\(10^{11}\)      & \(1.1821189632007215 \times 10^{38}\)       \\ 
		
		\(10^{12}\)      & \(8.310509439561298 \times 10^{41}\)        \\ 
		
		\(10^{13}\)      & \(6.010924984361412 \times 10^{45}\)        \\ 
		
		\(10^{14}\)      & \(4.454174125769207 \times 10^{49}\)        \\ 
		
		\(10^{15}\)      & \(3.3701437003780375 \times 10^{53}\)       \\ 
		
		\(10^{16}\)      & \(2.59663004179433 \times 10^{57}\)         \\ 
		
		\(10^{17}\)      & \(2.0327843159078997 \times 10^{61}\)       \\ 
		\(  \) & \(    \) \\
		\hline
	\end{tabular}
	\caption{Values of \(\mathcal{K}(x)\) for $10^4\leq x\leq 10^{17}$}
	\label{table 2}
\end{table}

\section{Quadratic Form Involving Sums of Prime Counting Function}
\subsection{Inequality involving Weighted Sums of $\pi(x)$}
\begin{thm}\label{thm3}
	Consider a quadratic form involving the sum of the prime counting function over smaller intervals,
	\begin{align}\label{20}
		\mathcal{L}(x) = \left( \sum_{k=1}^{n} \pi(x/k) \right)^2 - \frac{ex}{\log x} \left( \sum_{k=1}^{n} \pi(x/(ek)) \right)\mbox{ , }\hspace{10pt}n>1
	\end{align}
	For some fixed \(n\), then we have the following approximation,
	\begin{align}\label{21}
		\mathcal{L}(x) \approx O\left(\frac{x^2}{(\log x)^2}\right)
	\end{align}
	With $\mathcal{L}(x)>0$ for sufficiently large values of $x$.
\end{thm}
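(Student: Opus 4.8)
The plan is to follow the same recipe used in Theorems \ref{thm6} and \ref{thm2}: rewrite each prime-counting term via the estimate \eqref{6}, replace $\psi$ by its leading order using the Prime Number Theorem \eqref{5}, and then extract the dominant contribution. The only genuinely new feature here is the presence of the finite sums, which I expect to contribute the harmonic number $H_n := \sum_{k=1}^{n} 1/k$ as a multiplicative constant, so that the sign of $\mathcal{L}(x)$ is ultimately governed by whether $H_n$ exceeds $1$.

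First I would expand, for each fixed $k \in \{1, \dots, n\}$,
\[
	\pi(x/k) = \frac{\psi(x/k)}{\log x - \log k} + O\!\left(\frac{x}{k(\log x - \log k)^2}\right),
\]
and likewise for $\pi(x/(ek))$ with $\log x - 1 - \log k$ in the denominator. Since $n$ is fixed, each $\log k$ is bounded, so $\frac{1}{\log x - \log k} = \frac{1}{\log x}\bigl(1 + O(1/\log x)\bigr)$, and applying $\psi(x/k) \sim x/k$ gives $\pi(x/k) = \frac{x}{k \log x}\bigl(1 + O(1/\log x)\bigr)$. Summing over $k$ then yields
\[
	\sum_{k=1}^{n} \pi(x/k) = \frac{x H_n}{\log x} + O\!\left(\frac{x}{(\log x)^2}\right), \qquad \sum_{k=1}^{n} \pi(x/(ek)) = \frac{x H_n}{e \log x} + O\!\left(\frac{x}{(\log x)^2}\right).
\]

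Next I would square the first sum and multiply the second by $\frac{ex}{\log x}$. The square produces a leading term $\frac{x^2 H_n^2}{(\log x)^2}$, the cross- and error-terms being of order $\frac{x^2}{(\log x)^3}$ since there are only finitely many summands; the weighted second sum produces $\frac{x^2 H_n}{(\log x)^2}$ to the same accuracy. Subtracting gives
\[
	\mathcal{L}(x) = \frac{x^2 H_n(H_n - 1)}{(\log x)^2} + O\!\left(\frac{x^2}{(\log x)^3}\right),
\]
which simultaneously establishes \eqref{21} and, because $H_n > 1$ for every $n > 1$, shows that the dominant term is strictly positive, so $\mathcal{L}(x) > 0$ for all sufficiently large $x$.

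The step I expect to require the most care is the error bookkeeping in the squared sum: one must verify that every cross-product of a leading term with an $O(\cdot)$ remainder, as well as the subleading corrections coming from $\frac{1}{\log x - \log k} - \frac{1}{\log x}$, collapses into the single error term $O\!\left(x^2/(\log x)^3\right)$ rather than contaminating the leading coefficient $H_n(H_n-1)$. Because $n$ is held fixed this is a finite bookkeeping task, but it is the crux of turning the heuristic $\psi \sim x$ into a rigorous sign conclusion; the positivity itself is then immediate from $H_n > 1$.
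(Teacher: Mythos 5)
Your proposal is correct and follows essentially the same route as the paper's own proof: expand each summand via \eqref{6}, replace $\psi(y)$ by its leading term $y$, sum to produce the harmonic number $H_n$, and identify the dominant term $\frac{x^2 H_n(H_n-1)}{(\log x)^2}$ against an error of lower order. If anything, your bookkeeping is slightly sharper than the paper's, which further approximates $H_n \approx \log n + \gamma$ and then states the leading coefficient as $\log n(\log n - 1)$ --- a quantity that is negative for $n=2$ since $\log 2 < 1$ --- whereas your retention of the exact condition $H_n > 1$ yields positivity uniformly for all $n > 1$.
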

\begin{proof}
	For the proof, we evaluate terms inside the summand of $\mathcal{L}(x)$, a priori using the result \eqref{6} (cf. \cite{6}) in Theorem \eqref{thm1}.
	\begin{align}\label{29}
		\pi(x/k) = \frac{\psi(x/k)}{\log x - \log k} + O\left(\frac{x/k}{(\log x - \log k)^2}\right)
	\end{align} 
	\begin{align}\label{30}
		\pi(x/(ek)) = \frac{\psi(x/(ek))}{\log x - \log (ek)} + O\left(\frac{x/(ek)}{(\log x - \log (ek))^2}\right)
	\end{align}
	Hence,
	\begin{align*}
		\sum_{k=1}^{n} \pi(x/k) = \sum_{k=1}^{n} \left( \frac{\psi(x/k)}{\log x - \log k} + O\left(\frac{x/k}{(\log x - \log k)^2}\right) \right)
	\end{align*}
	\begin{align}\label{22}
		\hspace{120pt}= \sum_{k=1}^{n} \frac{\psi(x/k)}{\log x - \log k} + O\left(\sum_{k=1}^{n} \frac{x/k}{(\log x - \log k)^2}\right)
	\end{align}
	Similarly,
	\begin{align*}
		\sum_{k=1}^{n} \pi(x/(ek)) = \sum_{k=1}^{n} \left( \frac{\psi(x/(ek))}{\log x - \log (ek)} + O\left(\frac{x/(ek)}{(\log x - \log (ek))^2}\right) \right)
	\end{align*}
	\begin{align}\label{23}
		\hspace{150pt}	= \sum_{k=1}^{n} \frac{\psi(x/(ek))}{\log x - \log (ek)} + O\left(\sum_{k=1}^{n} \frac{x/(ek)}{(\log x - \log (ek))^2}\right)
	\end{align}
	Squaring \eqref{22} gives,
	\begin{align*}
		\left( \sum_{k=1}^{n} \pi(x/k) \right)^2 = \left( \sum_{k=1}^{n} \frac{\psi(x/k)}{\log x - \log k} + O\left(\sum_{k=1}^{n} \frac{x/k}{(\log x - \log k)^2}\right) \right)^2
	\end{align*}
	\begin{align*}
		\hspace{250pt}\approx \left( \sum_{k=1}^{n} \frac{\psi(x/k)}{\log x - \log k} \right)^2
	\end{align*}
	Ignoring the higher-order terms for the time being. Further computation yields,
	\begin{align}\label{25}
		\left( \sum_{k=1}^{n} \frac{\psi(x/k)}{\log x - \log k} \right)^2 = \sum_{k=1}^{n} \sum_{j=1}^{n} \frac{\psi(x/k)\psi(x/j)}{(\log x - \log k)(\log x - \log j)}
	\end{align}
	Combining \eqref{23} and \eqref{25},
	\begin{align}\label{26}
		\mathcal{L}(x) \approx \left( \sum_{k=1}^{n} \frac{\psi(x/k)}{\log x - \log k} \right)^2 - \frac{ex}{\log x} \sum_{k=1}^{n} \frac{\psi(x/(ek))}{\log x - \log (ek)}
	\end{align}
	An important observation is that, the leading term of \(\psi(x)\) is \(x\), so for large \(x\),
	\begin{align*}
		\psi(x/k) = \frac{x}{k} + O\left(\sqrt{\frac{x}{k}}\log^2 x\right) \mbox{ and, }	\psi(x/(ek)) = \frac{x}{ek} + O\left(\sqrt{\frac{x}{ek}}\log^2 x\right)
	\end{align*}
	Therefore, using the leading term approximation, we can deduce,
	\begin{align*}
		\mathcal{L}(x) \approx \left( \sum_{k=1}^{n} \frac{x/k}{\log x - \log k} \right)^2 - \frac{ex}{\log x} \sum_{k=1}^{n} \frac{x/(ek)}{\log x - \log (ek)}
	\end{align*}
	Where,
	\begin{align*}
		\left( \sum_{k=1}^{n} \frac{x/k}{\log x - \log k} \right)^2 \approx \left( \frac{x}{\log x} \sum_{k=1}^{n} \frac{1}{k} \right)^2	\approx \left( \frac{x}{\log x} H_n \right)^2
	\end{align*}
	\(H_n\) denoting the \(n\)-th \textit{Harmonic Number}, \(H_n \approx \log n+\gamma\), $\gamma$ being the \textit{Euler Constant}.
	
	As for the second term in the expression of $\mathcal{L}(x)$,
	\begin{align*}
		\frac{ex}{\log x} \sum_{k=1}^{n} \frac{x/(ek)}{\log x - \log (ek)} \approx \frac{ex}{\log x}\cdot  \frac{x}{e\log x} \sum_{k=1}^{n} \frac{1}{k}\approx \left(\frac{x}{\log x}\right)^2  H_n
	\end{align*}
	Therefore, 
	\begin{align*}
		\mathcal{L}(x) \approx \left( \frac{x H_n}{\log x} \right)^2 - \frac{x^2 H_n}{(\log x)^2}\approx \frac{x^2 (\log n)^2}{(\log x)^2} - \frac{x^2 \log n}{(\log x)^2}	\approx \frac{x^2 \log n (\log n - 1)}{(\log x)^2}
	\end{align*}
	On the other hand, analyzing the error terms from previously derived estimates, it can be deduced that,
	\begin{align}
		\mathcal{L}(x) = O\left(\frac{x^2}{(\log x)^2}\right)
	\end{align}
	Hence, \eqref{21} follows. Moreover, since the leading term \(\frac{x^2 \log n (\log n - 1)}{(\log x)^2}\) is positive for \(n > 1\), thus it implies that \(\mathcal{L}(x)\) is positive for large \(x\), and the proof is complete.
\end{proof}
\begin{rmk}
	We can rephrase Theorem \eqref{thm3} by claiming that, for every $n>1$,
	\begin{align}\label{51}
		\left( \sum_{k=1}^{n} \pi(x/k) \right)^2 > \frac{ex}{\log x} \left( \sum_{k=1}^{n} \pi(x/(ek)) \right)
	\end{align}
	for sufficiently large $x$.
\end{rmk}

\begin{figure}[hbt!]
	\centering
	\includegraphics[width=0.9\linewidth]{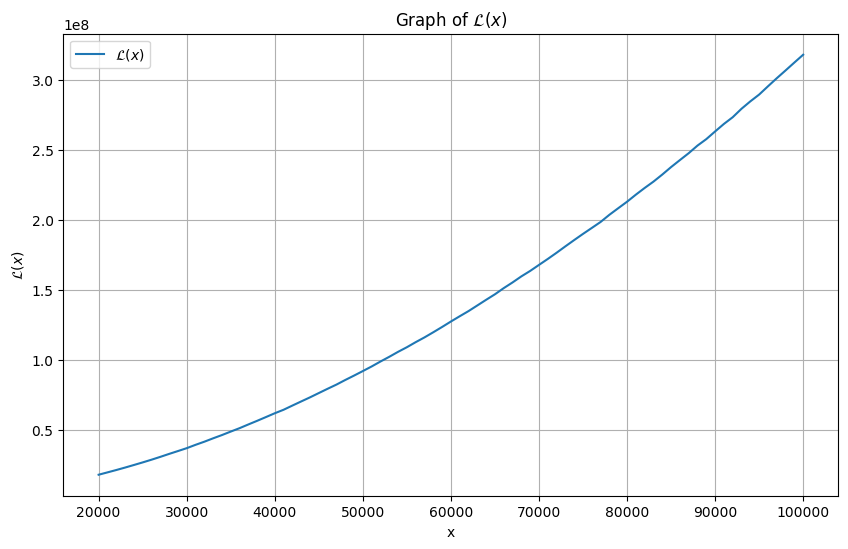}
	\caption{Graph of $\mathcal{L}(x)$}
	\label{fig3}
\end{figure}

\begin{table}[hbt!]
	\centering
	\begin{tabular}{|c|c|}
		\hline
		\rowcolor{gray}
		\textbf{\(x\)} & \textbf{\(\mathcal{L}(x)\)} \\ 
		\hline
		\(  \) & \(    \) \\ 
		
		\(10^4\)         & \(5.442878634267854 \times 10^{6}\)         \\ 
		
		\(10^5\)         & \(3.182941989056241 \times 10^{8}\)         \\ 
		
		\(10^6\)         & \(2.0720876553125698 \times 10^{10}\)       \\ 
		
		\(10^7\)         & \(1.453173495473891 \times 10^{12}\)        \\ 
		
		\(10^8\)         & \(1.0748621057424523 \times 10^{14}\)       \\ 
		
		\(10^9\)         & \(8.271311872938837 \times 10^{15}\)        \\ 
		
		\(10^{10}\)      & \(6.562072688654034 \times 10^{17}\)        \\ 
		
		\(10^{11}\)      & \(5.3333332449648206 \times 10^{19}\)       \\ 
		
		\(10^{12}\)      & \(4.4203146604764075 \times 10^{21}\)       \\ 
		
		\(10^{13}\)      & \(3.723359062321086 \times 10^{23}\)        \\ 
		
		\(10^{14}\)      & \(3.1792547132494815 \times 10^{25}\)       \\ 
		
		\(10^{15}\)      & \(2.7463355733587377 \times 10^{27}\)       \\ 
		
		\(10^{16}\)      & \(2.3962303815115464 \times 10^{29}\)       \\ 
		\(  \) & \(    \) \\
		\hline
	\end{tabular}
	\caption{Values of \(\mathcal{L}(x)\) for $10^4\leq x\leq 10^{16}$}
	\label{table 3}
\end{table}
\clearpage 
\subsection{Numerical Estimates for $\mathcal{L}(x)$}
For a specific scenario when $n=5$, plotting $\mathcal{L}(x)$ as compared to $x$ using \texttt{PYTHON} programming language \footref{foot1} gives us the following graph as in \textbf{Figure \eqref{fig3}} for $2\times 10^4\leq x\leq 10^5$. 
Moreover, applying \texttt{MATHEMATICA} \footref{foot1}, it can be asserted using the data shown in Table \eqref{table 3} in the range, $10^4\leq x\leq 10^{16}$ that, $\mathcal{L}(x)$ is indeed \textit{increasing}. As a result, the statement \eqref{51} can be properly accepted.
\subsection{Logarithmic Weighted Sum Inequality}
It is very much possible to improve \eqref{21} even further, where one can also consider the case which involves \textit{logarithmic weights}.
\begin{thm}\label{thm4}
	The following can in fact be conjectured for the logarithmic weights,
	\begin{align}\label{27}
		\mathcal{F}(x) := \left( \sum_{k=1}^{n} \frac{\pi(x/k)}{\log(x/k)} \right)^2 - \frac{ex}{\log x} \left( \sum_{k=1}^{n} \frac{\pi(x/(ek))}{\log(x/(ek))} \right)\mbox{ , }\hspace{10pt}n>1
	\end{align}
	Then, 
	\begin{align}\label{28}
		\mathcal{F}(x) \approx O\left(\frac{x^2}{(\log x)^3}\right)
	\end{align}
	And, $\mathcal{F}(x)<0$ for large values of $x$.
\end{thm}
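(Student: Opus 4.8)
The plan is to mirror the proof of Theorem \eqref{thm3}, substituting each $\pi$ by the $\psi$-approximation \eqref{6} but carrying the extra logarithmic factor present in every summand of \eqref{27}. First I would apply \eqref{6} to $\pi(x/k)$ and divide through by $\log(x/k)$, giving
\begin{align*}
	\frac{\pi(x/k)}{\log(x/k)} = \frac{\psi(x/k)}{(\log x - \log k)^2} + O\left(\frac{x/k}{(\log x - \log k)^3}\right),
\end{align*}
and likewise for $\pi(x/(ek))/\log(x/(ek))$, where the denominator becomes $(\log x - 1 - \log k)^2$. Summing over $1 \le k \le n$, invoking the Prime Number Theorem to replace $\psi(x/k)$ by $x/k$, and expanding $(\log x - \log k)^{-2} = (\log x)^{-2}\left(1 + O(1/\log x)\right)$ for each fixed $k \le n$, I would obtain the leading asymptotics
\begin{align*}
	\sum_{k=1}^{n} \frac{\pi(x/k)}{\log(x/k)} = \frac{x H_n}{(\log x)^2} + O\left(\frac{x}{(\log x)^3}\right), \qquad \sum_{k=1}^{n} \frac{\pi(x/(ek))}{\log(x/(ek))} = \frac{x H_n}{e(\log x)^2} + O\left(\frac{x}{(\log x)^3}\right),
\end{align*}
with $H_n$ the $n$-th Harmonic Number.

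The second step is to assemble $\mathcal{F}(x)$. Squaring the first sum produces a leading term of order $x^2 H_n^2/(\log x)^4$, the cross term against the error being only $O(x^2/(\log x)^5)$; multiplying the second sum by $ex/\log x$ produces the leading term $x^2 H_n/(\log x)^3$, with error $O(x^2/(\log x)^4)$. Subtracting and absorbing the now-subdominant squared contribution into the error, I arrive at
\begin{align*}
	\mathcal{F}(x) = \frac{x^2 H_n^2}{(\log x)^4} - \frac{x^2 H_n}{(\log x)^3} + O\left(\frac{x^2}{(\log x)^4}\right) = -\frac{x^2 H_n}{(\log x)^3} + O\left(\frac{x^2}{(\log x)^4}\right).
\end{align*}
This simultaneously yields the order estimate \eqref{28} and, since $H_n > 0$, the sign claim $\mathcal{F}(x) < 0$ for all sufficiently large $x$.

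The conceptual heart of the argument, which I would stress explicitly, is the contrast with Theorem \eqref{thm3}: in \eqref{20} both contributions were of common size $x^2/(\log x)^2$, so the sign was decided by the combinatorial factor $H_n(H_n - 1)$, whereas the logarithmic weighting here depresses the squared-sum term all the way to $x^2/(\log x)^4$ while the subtracted term only descends to $x^2/(\log x)^3$, so the latter dominates and forces negativity independently of $H_n$. Accordingly I expect the main obstacle to be error bookkeeping rather than any genuine conceptual difficulty: one must confirm that each error generated by the $\pi \to \psi$ substitution, by the replacement $\psi(x/k) \to x/k$, and by the reciprocal-logarithm expansions is truly $O(x^2/(\log x)^4)$, so that none of them can rival the dominant negative term $-x^2 H_n/(\log x)^3$. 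Once this is checked, the claimed conclusion follows.
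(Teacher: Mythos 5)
Your proposal is correct and follows essentially the same route as the paper's own proof: substitute the $\psi$-approximation \eqref{6} into each summand, replace $\psi(x/k)$ by its Prime Number Theorem main term, reduce the sums to harmonic numbers, and observe that the squared sum lives at order $x^2/(\log x)^4$ while the subtracted term lives at $x^2/(\log x)^3$, forcing $\mathcal{F}(x)\sim -x^2H_n/(\log x)^3<0$. Your version is in fact marginally tidier, since you keep $H_n$ exact (so negativity is immediate from $H_n>0$) where the paper approximates $H_n\approx\log n+\gamma$ and then needs $n>1$ for the sign of the leading term.
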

\begin{proof}
	A priori for large \(x\), utilizing \eqref{5} (cf. \cite{6}),
	\begin{align*}
		\psi(x/k) = \frac{x}{k} + O\left(\sqrt{\frac{x}{k}} \log^2 \left(\frac{x}{k}\right)\right) \mbox{ and, }\psi(x/(ek)) = \frac{x}{ek} + O\left(\sqrt{\frac{x}{ek}} \log^2 \left(\frac{x}{ek}\right)\right)
	\end{align*}
	Rigorously computation each and every term of $\mathcal{F}(x)$ yields,
	\begin{align}\label{31}
		\sum_{k=1}^n \frac{\pi(x/k)}{\log(x/k)} = \sum_{k=1}^n \left( \frac{\psi(x/k)}{(\log(x/k))^2} + O\left(\frac{x/k}{(\log(x/k))^3}\right) \right),
	\end{align}
	and similarly,
	\begin{align}\label{32}
		\sum_{k=1}^n \frac{\pi(x/(ek))}{\log(x/(ek))} = \sum_{k=1}^n \left( \frac{\psi(x/(ek))}{(\log(x/(ek)))^2} + O\left(\frac{x/(ek)}{(\log(x/(ek)))^3}\right) \right).
	\end{align}
	Subsequently squaring the left-hand side of \eqref{31},
	\begin{align*}
		\left( \sum_{k=1}^n \frac{\pi(x/k)}{\log(x/k)} \right)^2=\left( \sum_{k=1}^n \left( \frac{\psi(x/k)}{(\log(x/k))^2} \right) + \sum_{k=1}^n O\left(\frac{x/k}{(\log(x/k))^3}\right) \right)^2
	\end{align*}
	Using the \textit{Cauchy-Schwarz Inequality}, and considering the main term and error terms separately,
	\begin{align}\label{33}
		\left( \sum_{k=1}^n \frac{x}{k(\log(x/k))^2} \right)^2 = \frac{x^2}{(\log x)^4} \left( \sum_{k=1}^n \frac{1}{k} \right)^2= \frac{x^2}{(\log x)^4} \left( H_n \right)^2
	\end{align}
	Where, $H_n$ denotes the \textit{Harmonic Number}. Using the harmonic series approximation, 
	\begin{align}\label{37}
		H_n=\sum_{k=1}^n \frac{1}{k}\approx \log n+\gamma.
	\end{align}
	Hence, from \eqref{33},
	\begin{align}
		\left( \sum_{k=1}^n \frac{\pi(x/k)}{\log(x/k)} \right)^2=\frac{x^2 \log^2 n}{(\log x)^4}.
	\end{align}
	As for the error term,
	\begin{align*}
		O\left( \sum_{k=1}^n \frac{x/k}{(\log(x/k))^3} \right)=O\left(\sum_{k=1}^n \frac{x/k}{(\log(x/k))^3}\right)=O\left(\frac{x \log n}{(\log x)^3}\right)
	\end{align*}
	Thus, combining all our deductions,
	\begin{align*}
		\left( \sum_{k=1}^n \frac{\pi(x/k)}{\log(x/k)} \right)^2 = \frac{x^2 \log^2 n}{(\log x)^4} + O\left( \frac{x^2 \log^2 n}{(\log x)^6} \right)
	\end{align*}
	Furthermore, for the second term in \eqref{27}, we have the following calculations,
	\begin{align}
		\frac{ex}{\log x} \sum_{k=1}^n \frac{\pi(x/(ek))}{\log(x/(ek))} = \frac{ex}{\log x} \sum_{k=1}^n \left( \frac{\psi(x/(ek))}{(\log(x/(ek)))^2} + O\left( \frac{x/(ek)}{(\log(x/(ek)))^3} \right) \right).
	\end{align}
	Approximating the main term,
	\begin{align*}
		\frac{ex}{\log x} \sum_{k=1}^n \frac{x/(ek)}{(\log(x/(ek)))^2} = \frac{ex^2}{\log x} \sum_{k=1}^n \frac{1}{ek (\log(x/(ek)))^2}
	\end{align*}
	Using the harmonic series approximation,
	\begin{align*}
		\sum_{k=1}^n \frac{1}{ek} \approx \frac{\log n}{e}
	\end{align*}
	As a consequence,
	\begin{align*}
		\frac{ex^2}{\log x} \frac{\log n}{e (\log x)^2} = \frac{x^2 \log n}{(\log x)^3}.
	\end{align*}
	Combining all, 
	\begin{align*}
		\mathcal{F}(x) = \left( \sum_{k=1}^{n} \frac{\pi(x/k)}{\log(x/k)} \right)^2 - \frac{ex}{\log x} \left( \sum_{k=1}^{n} \frac{\pi(x/(ek))}{\log(x/(ek))} \right)
	\end{align*}
	\begin{align*}
		=\frac{x^2 \log^2 n}{(\log x)^4} + O\left( \frac{x^2 \log^2 n}{(\log x)^6} \right) - \frac{x^2 \log n}{(\log x)^3}=- \frac{x^2 \log n}{(\log x)^3}+O\left( \frac{x^2 \log n}{(\log x)^4}\right)
	\end{align*}
	Considering the dominant term. As a result, we conclude,
	\begin{align*}
		\mathcal{F}(x)\approx O\left( \frac{x^2 \log n}{(\log x)^4}\right) =O\left( \frac{x^2}{(\log x)^3}\right)
	\end{align*}
	for large $x$, and moreover, the dominant term, $\frac{x^2 \log n}{(\log x)^3}$ being always positive for $n>1$, we can thus assert that, $\mathcal{F}(x)<0$ for sufficiently large values of $x$.
\end{proof}
\begin{rmk}
	We can reaffirm Theorem \eqref{thm4} in the following manner. For any $n>1$,
	\begin{align}\label{52}
		\left( \sum_{k=1}^{n} \frac{\pi(x/k)}{\log(x/k)} \right)^2 < \frac{ex}{\log x} \left( \sum_{k=1}^{n} \frac{\pi(x/(ek))}{\log(x/(ek))} \right)
	\end{align}
	for sufficiently large values of $x$.
\end{rmk}
\subsection{Numerical Estimates for $\mathcal{F}(x)$}
Similarly as in other cases, \texttt{PYTHON} programming language \footref{foot1} can in fact be applied in order to observe the plot of $\mathcal{F}(x)$ as compared to $x$. The following \textbf{Figure \eqref{fig4}} shows the graph for $2\times 10^4\leq x\leq 10^5$ and considering $n=5$.\par 
In addition to above, with the help of \texttt{MATHEMATICA} \footref{foot1}, it can be observed in Table \eqref{table 4} that, $\mathcal{F}(x)$ is in fact \textit{decreasing} in the range, $10^4\leq x\leq 10^{14}$. As a result, the statement \eqref{52} can also be numerically verified for large values of $x$.

\begin{figure}[hbt!]
	\centering
	\includegraphics[width=0.9\linewidth]{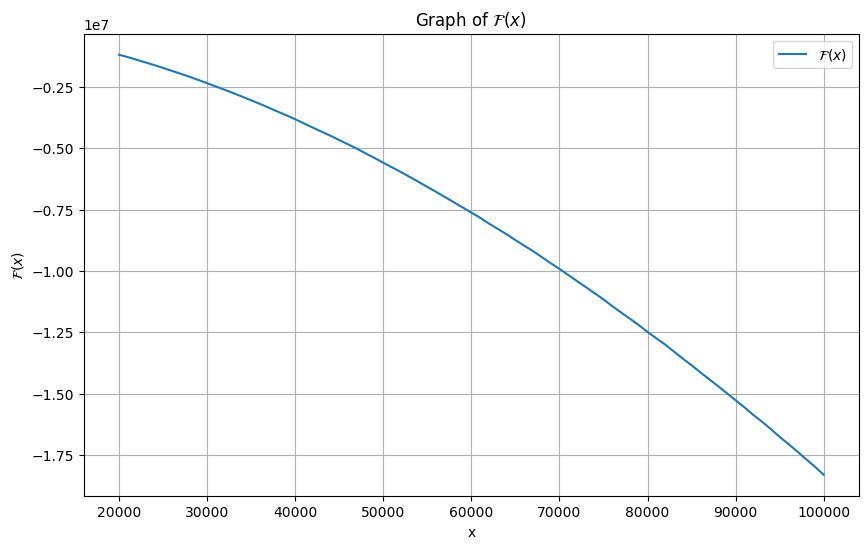}
	\caption{Graph of $\mathcal{F}(x)$}
	\label{fig4}
\end{figure}

\begin{table}[hbt!]
	\centering
	\begin{tabular}{|c|c|}
		\hline
		\rowcolor{gray}
		\textbf{\(x\)} & \textbf{\(\mathcal{F}(x)\)} \\ 
		\hline
		\(  \) & \(    \) \\ 
		
		\(10^4\)         & \(-377,275.13516957406\)         \\ 
		
		\(10^5\)         & \(-1.830179494511997 \times 10^{7}\)         \\ 
		
		\(10^6\)         & \(-1.0203946684413686 \times 10^{9}\)       \\ 
		
		\(10^7\)         & \(-6.256701329540303 \times 10^{10}\)        \\ 
		
		\(10^8\)         & \(-4.1109224248432134 \times 10^{12}\)       \\ 
		
		\(10^9\)         & \(-2.8451189547136775 \times 10^{14}\)       \\ 
		
		\(10^{10}\)      & \(-2.0504855777527976 \times 10^{16}\)        \\ 
		
		\(10^{11}\)      & \(-1.5264989872331325 \times 10^{18}\)       \\ 
		
		\(10^{12}\)      & \(-1.1670093161419563 \times 10^{20}\)       \\ 
		
		\(10^{13}\)      & \(-9.121682100604639 \times 10^{21}\)        \\ 
		
		\(10^{14}\)      & \(-7.264828101112622 \times 10^{23}\)        \\ 
		\(  \) & \(    \) \\
		\hline
	\end{tabular}
	\caption{Values of \(\mathcal{F}(x)\) for $10^4\leq x\leq 10^{14}$}
	\label{table 4}
\end{table}

\section{A More General Framework}

Given the asymptotic nature of the prime counting function $\pi(x)$, the general form of such polynomial functions can be formulated as follows.
\begin{align}\label{34}
	\mathcal{N}(x) := P(\pi(x)) - \frac{e x}{\log x} Q(\pi(x/e)) + R(x)
\end{align}
where \(P\) and \(Q\) are polynomials and \(R\) is a term that compensates for higher-order error terms. Subsequently, one can claim that, the error term in \eqref{34} might behave similarly as in the previous cases. In mathematical terms, it might very well be possible that,
\begin{align}\label{35}
	\mathcal{N}(x) \approx O\left(\frac{x^d}{(\log x)^{d+1}}\right)
\end{align}
for some degree \(d\) depending on the degrees of \(P\) and \(Q\).

In order to justify our claim \eqref{35} corresponding to \eqref{34}, let's delve into a specific example by explicitly choosing polynomials $P$, $Q$, and \(R(x)\) and studying the function $\mathcal{N}(x)$ for different cases explicitely.

\subsection{A Typical Example I : Generalized Cubic Polynomial Inequality}
We assume, 
\begin{align*}
	P(\pi(x)):=(\pi(x))^{3^n} \mbox{ , }\hspace{10pt}Q(\pi(x/e)):=3(\pi(x/e))^{3^n - 1}
\end{align*}
and, 
\begin{align*}
	R(x):=\frac{3e^2 x}{(\log x)^2} (\pi(x/e^2))^{3^n - 2}
\end{align*}
for every $n>1$. Then, in contrast to what we've discussed in Theorem \eqref{thm6}, we can observe a significant change in the behaviour of the so-called \textit{Generalized Cubic Polynomial} defined as,
\begin{align}\label{55}
	\mathcal{H}_n(x) := (\pi(x))^{3^n} - \frac{3ex}{\log x} (\pi(x/e))^{3^n - 1} + \frac{3e^2 x}{(\log x)^2} (\pi(x/e^2))^{3^n - 2}
\end{align}
Furthermore, one has the following estimate for $\mathcal{H}_n (x)$ as $x$ increases.
\begin{thm}\label{thm9}
	A priori $\mathcal{H}_n (x)$ defined as in \eqref{55} for every $n>1$, we can derive that,
	\begin{align}
		\mathcal{H}_n (x)\approx O\left(\frac{x^{3^n}}{(\log x)^{3^n + 1}}\right)
	\end{align}
	Furthermore, $\mathcal{H}_n (x)>0$ for sufficiently large values of $x$.
\end{thm}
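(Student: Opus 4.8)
The plan is to run the same machinery as in the proof of Theorem \ref{thm6}, now with the fixed exponent $N := 3^n$. First I would record the three expansions supplied by \eqref{6},
\[
\pi(x) = \frac{\psi(x)}{\log x} + O\!\left(\frac{x}{(\log x)^2}\right), \qquad \pi(x/e) = \frac{\psi(x/e)}{\log x - 1} + O\!\left(\frac{x/e}{(\log x - 1)^2}\right),
\]
together with \eqref{12} for $\pi(x/e^2)$, and then raise them to the powers $N$, $N-1$, and $N-2$ respectively. The workhorse is the binomial expansion: in each power the leading contribution is the top power of $\psi$ over the appropriate power of the shifted logarithm, while every remaining cross-term carries at least one factor of the error $O(x/(\log x)^2)$. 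After multiplying by the prefactors $1$, $\frac{3ex}{\log x}$, and $\frac{3e^2x}{(\log x)^2}$, each such remainder is absorbed into $O\!\left(\frac{x^N}{(\log x)^{N+1}}\right)$, yielding the analogue of \eqref{7},
\[
\mathcal{H}_n(x) = \frac{(\psi(x))^N}{(\log x)^N} - \frac{3ex\,(\psi(x/e))^{N-1}}{(\log x)(\log x-1)^{N-1}} + \frac{3e^2 x\,(\psi(x/e^2))^{N-2}}{(\log x)^2(\log x-2)^{N-2}} + O\!\left(\frac{x^N}{(\log x)^{N+1}}\right).
\]

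Next I would insert the Prime Number Theorem estimate $\psi(y)\sim y$ from \eqref{5} and separate powers of $x$. Using $\psi(x)\sim x$, $\psi(x/e)\sim x/e$, $\psi(x/e^2)\sim x/e^2$ and the crude replacement $\log x - j \sim \log x$, the first term contributes $\frac{x^N}{(\log x)^N}$ with coefficient $+1$; the second contributes $-\,3e^{2-N}\frac{x^N}{(\log x)^N}$, since $e\cdot e^{-(N-1)} = e^{2-N}$; and the third term carries only $x^{N-1}$ and is therefore of strictly lower order in $x$. Hence
\[
\mathcal{H}_n(x) = \frac{x^N}{(\log x)^N}\bigl(1 - 3e^{2-N}\bigr) + O\!\left(\frac{x^{N-1}}{(\log x)^N}\right) + O\!\left(\frac{x^N}{(\log x)^{N+1}}\right),
\]
which reproduces the remainder bound recorded in the statement, understood (as in Theorem \ref{thm6}) as controlling the error beyond the explicit leading term.

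The main obstacle, and the step that genuinely distinguishes this result from the cubic case, is pinning down the sign of the leading coefficient $1 - 3e^{2-N}$. For the ordinary cubic one has $N = 3$ and $1 - 3e^{-1} = 1 - 3/e < 0$, which is exactly what forced $\mathcal{H}(x) < 0$ in Theorem \ref{thm6}; but here $n > 1$ gives $N = 3^n \geq 9$, so the subtracted term has collapsed: $3e^{2-N} \leq 3e^{-7} \approx 2.7\times 10^{-3} < 1$. I would make this explicit via $3e^{2-N} < 1 \iff N > 2 + \log 3 \approx 3.0986$, which holds for every $N = 3^n$ with $n>1$, so $1 - 3e^{2-N} > 0$. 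Since the leading term is then positive and of order $\frac{x^N}{(\log x)^N}$, it dominates both error contributions for large $x$, giving $\mathcal{H}_n(x) > 0$. The care required is purely in the bookkeeping of the binomial remainders, namely checking that every discarded cross-term is genuinely $O\!\left(\frac{x^N}{(\log x)^{N+1}}\right)$ and that the $x^{N-1}$ third term cannot interfere with the sign of the $x^N$ leading term, which is immediate once the powers of $x$ are separated.
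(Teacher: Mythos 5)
Your proposal is correct and follows the same overall route as the paper: expand each power via \eqref{6}, substitute $\psi(y)\sim y$ from the Prime Number Theorem, factor out $\frac{x^{3^n}}{(\log x)^{3^n}}$, and read off the sign of the leading coefficient. However, your handling of the decisive step, the sign analysis, is genuinely different from (and better than) the paper's. In the paper's equation \eqref{62} the subtracted term inside the parentheses is written as $\frac{3e}{e^{3^n-1}(\log x - 1)^{3^n-1}}$, which is off by a factor of $(\log x)^{3^n-1}$: after factoring out $\frac{x^{3^n}}{(\log x)^{3^n}}$ the correct ratio is
\[
\frac{3e}{e^{3^n-1}}\cdot\frac{(\log x)^{3^n-1}}{(\log x-1)^{3^n-1}} \longrightarrow 3e^{2-3^n},
\]
a nonzero constant, not a quantity decaying in $x$ as the paper's display suggests. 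The paper then argues that the correction terms are "small relative to the leading term" because of exponentially small factors, so the coefficient is essentially $1$; taken literally, that reasoning would apply equally to $n=1$ and would "prove" $\mathcal{H}(x)>0$, contradicting Theorem \ref{thm6}. Your version keeps the powers of $\log x$ balanced, identifies the leading coefficient as the constant $1-3e^{2-N}$ with $N=3^n$, and isolates the exact threshold $N>2+\log 3$ separating the positive regime ($n>1$) from the negative one ($n=1$, where $1-3/e<0$). This makes the dichotomy between Theorem \ref{thm9} and Theorem \ref{thm6} transparent rather than accidental, and it is the argument the paper should have given.
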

\begin{proof}
	Utilizing \eqref{6} in Theorem \eqref{thm1} as we've done in other cases, along with the derivations done in \eqref{1} and \eqref{12}, we rigorously compute the following terms of $\mathcal{H}_n (x)$ indivudually for large values of $n$ and $x$,
	\begin{align*}
		(\pi(x))^{3^n} = \left(\frac{x}{\log x}\right)^{3^n} + 3^n \left(\frac{x}{\log x}\right)^{3^n - 1} O\left(\frac{x}{\log^2 x}\right) + \cdots
	\end{align*}
	\begin{align}\label{59}
		\hspace{100pt} = \left(\frac{x}{\log x}\right)^{3^n} + O\left(\frac{x^{3^n}}{(\log x)^{3^n + 1}}\right)
	\end{align}
	\begin{align*}
		(\pi(x/e))^{3^n - 1} = \left(\frac{x/e}{\log x - 1}\right)^{3^n - 1} + (3^n - 1) \left(\frac{x/e}{\log x - 1}\right)^{3^n - 2} O\left(\frac{x}{\log^2 x}\right) + \cdots
	\end{align*}
	\begin{align}\label{60}
		\hspace{100pt}= \left(\frac{x}{e (\log x - 1)}\right)^{3^n - 1} + O\left(\frac{x^{3^n - 1}}{(\log x)^{3^n}}\right)
	\end{align}
	and,
	\begin{align*}
		(\pi(x/e^2))^{3^n - 2} = \left(\frac{x/e^2}{\log x - 2}\right)^{3^n - 2} + (3^n - 2) \left(\frac{x/e^2}{\log x - 2}\right)^{3^n - 3} O\left(\frac{x}{\log^2 x}\right) + \cdots
	\end{align*}
	\begin{align}\label{61}
		\hspace{100pt} = \left(\frac{x}{e^2 (\log x - 2)}\right)^{3^n - 2} + O\left(\frac{x^{3^n - 2}}{(\log x)^{3^n}}\right)
	\end{align}
	Substituting \eqref{59}, \eqref{60} and \eqref{61} back into \(\mathcal{H}_n(x)\) yields,
	\begin{align*}
		\mathcal{H}_n(x) = \left(\frac{x}{\log x}\right)^{3^n} + \frac{3e^2 x}{(\log x)^2} \left(\frac{x}{e^2 (\log x - 2)}\right)^{3^n - 2} - \frac{3ex}{\log x} \left(\frac{x}{e (\log x - 1)}\right)^{3^n - 1} 
	\end{align*}
	\begin{align*}
		\hspace{250pt}+ O\left(\frac{x^{3^n}}{(\log x)^{3^n + 1}}\right)
	\end{align*}
	\begin{align*}
		= \frac{x^{3^n}}{(\log x)^{3^n}} \left( 1 - \frac{3e}{e^{3^n - 1} (\log x - 1)^{3^n - 1}} + \frac{3e^2}{e^{2(3^n - 2)} (\log x - 2)^{3^n - 2}} \frac{1}{x} \right)
	\end{align*}
	\begin{align}\label{62}
		 \hspace{250pt}+ O\left(\frac{x^{3^n}}{(\log x)^{3^n + 1}}\right)
	\end{align}
	Important to note that, for sufficiently large \(x\), the dominant term on the R. H. S. of \eqref{62} will be \(\frac{x^{3^n}}{(\log x)^{3^n}}\). The other terms will have exponentially decaying factors due to the presence of \(e^{-(3^n - 1)}\) and \(e^{-2(3^n - 2)}\), making them small relative to the leading term. Therefore, the dominant term is positive, and the higher-order error terms do not affect the sign significantly. 
	
	In other words, the terms \( \frac{3e}{e^{3^n - 1} (\log x - 1)^{3^n - 1}} \) and \( \frac{3e^2}{e^{2(3^n - 2)} (\log x - 2)^{3^n - 2}} \frac{1}{x} \) on \eqref{62} are small for large \(x\), the dominant term \(\frac{x^{3^n}}{(\log x)^{3^n}}\) ensures that \(\mathcal{H}_n(x) > 0\). Moreover, we have,
	\begin{align*}
		\mathcal{H}_n (x)\approx O\left(\frac{x^{3^n}}{(\log x)^{3^n + 1}}\right)
	\end{align*}
	And thus the proof is complete.
	
\end{proof}

\begin{figure}[hbt!]
	\centering
	\includegraphics[width=0.9\linewidth]{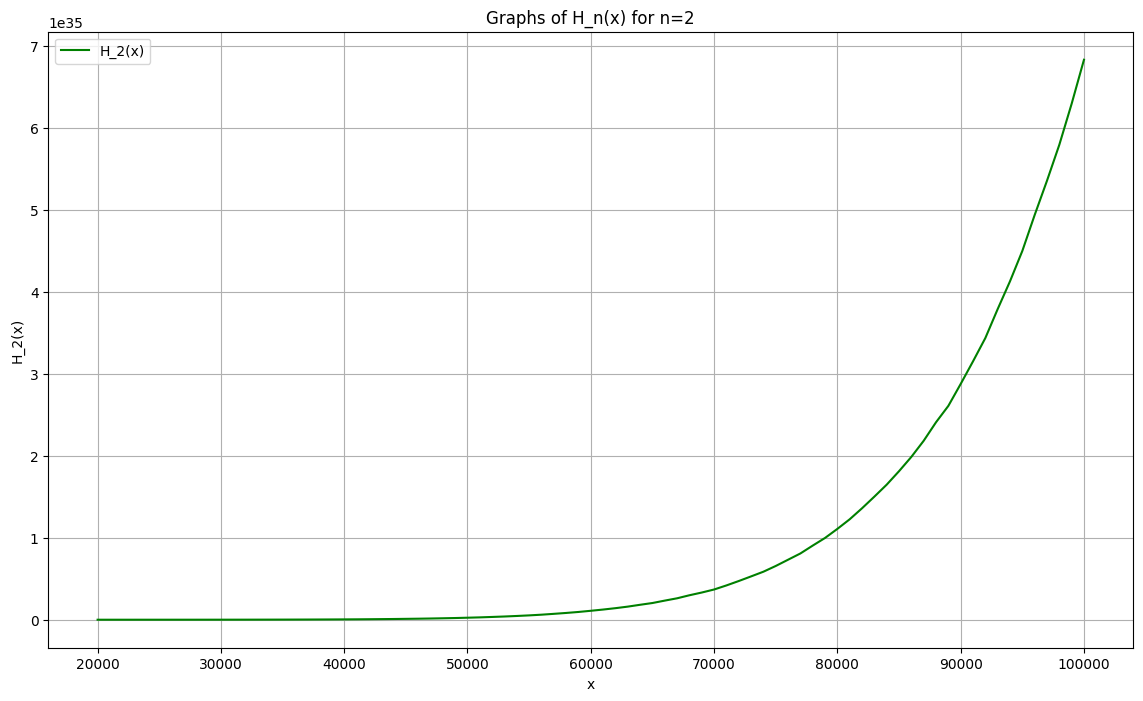}
	\caption{Graph of $\mathcal{H}_{n}(x)$ for $n=2$}
	\label{fig7}
\end{figure}
\begin{figure}[hbt!]
	\centering
	\includegraphics[width=0.9\linewidth]{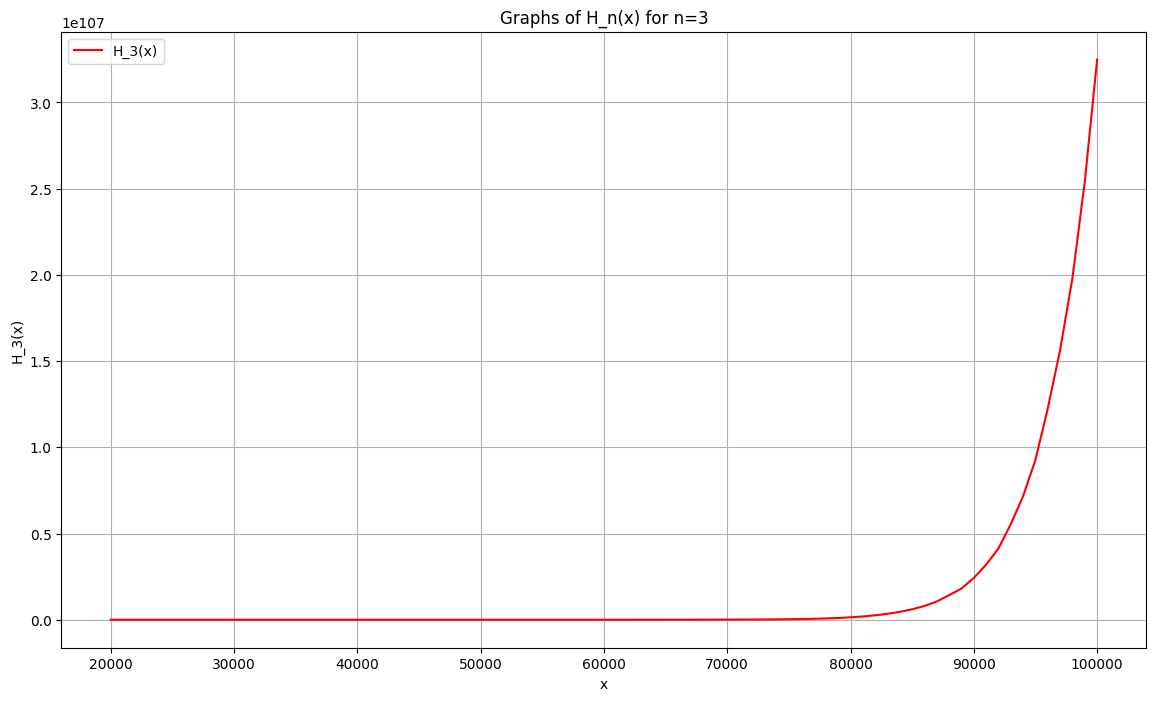}
	\caption{Graph of $\mathcal{H}_{n}(x)$ for $n=3$}
	\label{fig8}
\end{figure}

\begin{table}[hbt!]
	\centering
	\begin{tabular}{|c|c|c|}
		\hline
		\rowcolor{gray}
		\textbf{\(x\)} & \textbf{\(\mathcal{H}_2(x)\)} & \textbf{\(\mathcal{H}_3(x)\)} \\ 
		\hline
		\(  \) & \(    \) & \(    \) \\
		
		\(10^4\)         & \(6.353725021975254 \times 10^{27}\)         & \(2.617585266401968 \times 10^{83}\)         \\ 
		
		\(10^5\)         & \(6.835585478626048 \times 10^{35}\)         & \(3.2474882786926336 \times 10^{107}\)         \\ 
		
		\(10^6\)         & \(1.1261441103738037 \times 10^{44}\)       & \(1.4493790443082677 \times 10^{132}\)       \\ 
		
		\(10^7\)         & \(2.517601761588046 \times 10^{52}\)        & \(1.6171807959592812 \times 10^{157}\)        \\ 
		
		\(10^8\)         & \(6.965999334038062 \times 10^{60}\)       & \(3.42269895601566 \times 10^{182}\)       \\ 
		
		\(10^9\)         & \(2.2631415625131205 \times 10^{69}\)       & \(1.1729707311062672 \times 10^{208}\)       \\ 
		
		\(10^{10}\)      & \(8.334950926673871 \times 10^{77}\)        & \(5.856806953089547 \times 10^{233}\)        \\ 
		
		\(10^{11}\)      & \(3.393363660513159 \times 10^{86}\)       & \(3.950820693357716 \times 10^{259}\)       \\ 
		
		\(10^{12}\)      & \(1.4995319398929942 \times 10^{95}\)       & \(3.408309291619576 \times 10^{285}\)       \\ 
		
		\(10^{13}\)      & \(7.0941717053768875 \times 10^{103}\)        & \(3.608074552069926 \times 10^{311}\)        \\ 
		
		\(10^{14}\)      & \(3.555379086542425 \times 10^{112}\)       & \(4.540919459707699 \times 10^{337}\)       \\ 
		
		\(10^{15}\)      & \(1.8720454577090458 \times 10^{121}\)       & \(6.627717169602305 \times 10^{363}\)       \\ 
		
		\(10^{16}\)      & \(1.028783938302183 \times 10^{130}\)       & \(1.0998319401738324 \times 10^{390}\)       \\ 
		
		\(10^{17}\)      & \(5.869229663529639 \times 10^{138}\)       & \(2.041946308723196 \times 10^{416}\)       \\ 
		
		\(10^{18}\)      & \(3.460762114044545 \times 10^{147}\)       & \(4.185719359179408 \times 10^{442}\)       \\ 
		\(  \) & \(    \) & \(    \)\\
		\hline
	\end{tabular}
	\caption{Values of \(\mathcal{H}_2(x)\), \(\mathcal{H}_3(x)\) for $10^4\leq x\leq 10^{18}$}
	\label{table 7}
\end{table}
\subsection{Numerical Estimates for $\mathcal{H}_{n}(x)$}
Applying \texttt{PYTHON} programming language \footref{foot1}, we can in fact observe the plot of $\mathcal{H}_{n}(x)$ ($n=1,2,3$) as compared to $x$ for some special cases respectively. (N.B. Ardent researchers are highly encouraged to study the same using any different values of $n$) Subsequently, \textbf{Figure \eqref{fig1}} \textbf{Figure \eqref{fig7}} and \textbf{Figure \eqref{fig8}} represents the respective graphs for $2\times 10^4\leq x\leq 10^5$.\par 
Furthermore, using \texttt{MATHEMATICA} \footref{foot1} it can be inferred from Tables \eqref{table 1} and \eqref{table 7} that, unlike $\mathcal{H}_{1}(x)$ (in this case, $\mathcal{H}_1 (x)$ is simply denoted by $\mathcal{H}(x)$ as defined in \eqref{64}) which is strictly monotone \textit{decreasing}, $\mathcal{H}_{2}(x)$ and $\mathcal{H}_{3}(x)$ are strictly monotone \textit{increasing}  while $x$ assumes values in the range, $10^4\leq x\leq 10^{18}$. As a result, it can surely be concluded that, $\mathcal{H}_{n}(x)>0$ for sufficiently large $x$ and for $n>1$, having an exception for $n=1$.

Again, choosing $P$, $Q$ and $R$ accordingly, we can indeed conjecture a certain generalization of the \textit{Weighted Sum Inequality} (cf. Theorem \eqref{thm3}).

\subsection{A Typical Example II : Generalized Weighted Sum Inequality}
For some fixed $n>1$, consider the polynomials, 
\begin{align*}
	P(\pi(x))= Q(\pi(x))=\left( \sum_{k=1}^{n} \pi(x/k) \right)^r 
\end{align*}
To maintain symmetry and include higher-order error terms, we choose \(R(x) = \left( \sum_{k=1}^{n} \pi(x/(e^2k)) \right)^r\). It can be observed that, degrees of each of the polynomials $P$, $Q$ and $R$ are the same $=r$ ($>1$).
We study the polynomial,
\begin{align}\label{63}
	\mathcal{N}_{r}(x) := \left( \sum_{k=1}^{n} \pi(x/k) \right)^{r} - \frac{e x}{\log x} \left( \sum_{k=1}^{n} \pi(x/(ek)) \right)^{r} + \left( \sum_{k=1}^{n} \pi(x/(e^2k)) \right)^{r}
\end{align} 
under two circumstances separately.
\subsubsection{deg(P), deg(Q) and deg(R) are odd}
We assume, $r=2m+1$, for any positive integer $m$. 
A priori from the approximations derived in \eqref{5} and \eqref{6} (cf. \cite{6}), we substitute $(x/k)$, $(x/ek)$ and $x/(e^2k)$ in them to compute each and every term in the polynomial separately.
\begin{align*}
	\sum_{k=1}^{n} \pi(x/k) = \sum_{k=1}^{n} \left( \frac{x}{k \log(x/k)} + O\left(\frac{x/k}{\log^2(x/k)}\right) \right)=\sum_{k=1}^{n} \left( \frac{x}{k \log x} + O\left(\frac{x}{k \log^2 x}\right) \right)
\end{align*}
\begin{align}\label{42}
	=\frac{x}{\log x} \sum_{k=1}^{n} \frac{1}{k} + O\left(\frac{x}{\log^2 x} \sum_{k=1}^{n} \frac{1}{k}\right)=\frac{x}{\log x} (\log n + \gamma) + O\left(\frac{x}{\log^2 x} (\log n + \gamma)\right)
\end{align}
Using the harmonic series approximation \eqref{37}.

Thus,
\begin{align}\label{38}
	\left( \sum_{k=1}^{n} \pi(x/k) \right)^{2m+1} = \left( \frac{x}{\log x} (\log n + \gamma) \right)^{2m+1}+O\left( \frac{x^{2m+1}}{(\log x)^{2m+2}} (\log n + \gamma)^{2m+1} \right)
\end{align}

For the second term in \(\mathcal{N}_{2m+1}(x)\),
\begin{align*}
	\frac{e x}{\log x} \left( \sum_{k=1}^{n} \pi(x/(ek)) \right)^{2m+1} = \frac{e x}{\log x} \left( \frac{x}{e \log x} (\log n + \gamma) \right)^{2m+1} + O\left(\frac{x^{2m+1}}{(\log x)^{2m+2}}\right)
\end{align*}
\begin{align}\label{39}
	\hspace{100pt}=\frac{x^{2m+2}}{e^{2m} (\log x)^{2m+2}} (\log n + \gamma)^{2m+1}+O\left(\frac{x^{2m+1}}{(\log x)^{2m+2}}\right)
\end{align}
Finally, for \(R(x)\),
\begin{align}\label{40}
	\left( \sum_{k=1}^{n} \pi(x/(e^2k)) \right)^{2m+1}=\frac{x^{2m+1}}{e^{4m+2} (\log x)^{2m+1}} (\log n + \gamma)^{2m+1}+ O\left(\frac{x^{2m+1}}{(\log x)^{2m+2}}\right)
\end{align}

Combining \eqref{38}, \eqref{39} and \eqref{40},
\begin{align*}
	\mathcal{N}_{2m+1}(x) = \frac{x^{2m+1}}{(\log x)^{2m+1}} (\log n + \gamma)^{2m+1} - \frac{x^{2m+2}}{e^{2m} (\log x)^{2m+2}} (\log n + \gamma)^{2m+1}
\end{align*}
\begin{align*}
	\hspace{100pt}+ \frac{x^{2m+1}}{e^{4m+2} (\log x)^{2m+1}} (\log n + \gamma)^{2m+1}+O\left(\frac{x^{2m+1}}{(\log x)^{2m+2}}\right)
\end{align*}
\begin{align}
	\hspace{100pt}=- \frac{x^{2m+2}}{e^{2m} (\log x)^{2m+2}} (\log n + \gamma)^{2m+1}+O\left(\frac{x^{2m+1}}{(\log x)^{2m+2}}\right)
\end{align}
Subsequently, the dominant error term in \(\mathcal{N}_{2m+1}(x)\) can be found as,
\begin{align*}
	O\left( \frac{x^{2m+1}}{(\log x)^{2m+2}} \right)
\end{align*}
\subsubsection{deg(P), deg(Q) and deg(R) are even}
In this case, we assume, $r=2m$, for any positive integer $m$. 
Similarly, as in the first case, we utilize the approximations deduced in \eqref{5} and \eqref{6} (cf. \cite{6}), we substitute $(x/k)$, $(x/ek)$ and $x/(e^2k)$ in them to approximate each and every term in the polynomial individually. From \eqref{42},
\begin{align}\label{43}
	\left( \sum_{k=1}^{n} \pi(x/k) \right)^{2m}=\frac{x^{2m}}{(\log x)^{2m}} (\log n + \gamma)^{2m}+O\left(\frac{x^{2m}}{(\log x)^{2m+1}}\right)
\end{align}
Moreover, for the second term in \(\mathcal{N}_{2m}(x)\),
\begin{align*}
	\frac{e x}{\log x} \left( \sum_{k=1}^{n} \pi(x/(ek)) \right)^{2m} = \frac{e x}{\log x} \left( \frac{x}{e \log x} (\log n + \gamma) \right)^{2m} + O\left(\frac{x^{2m}}{(\log x)^{2m+1}}\right)
\end{align*}
\begin{align}\label{44}
	\hspace{100pt}=\frac{x^{2m+1}}{e^{2m-1} (\log x)^{2m+1}} (\log n + \gamma)^{2m}+O\left(\frac{x^{2m}}{(\log x)^{2m+1}}\right)
\end{align}
Finally, for \(R(x)\),
\begin{align}\label{45}
	\left( \sum_{k=1}^{n} \pi(x/(e^2k)) \right)^{2m}=\frac{x^{2m}}{e^{4m} (\log x)^{2m}} (\log n + \gamma)^{2m}+O\left(\frac{x^{2m}}{(\log x)^{2m+1}}\right)
\end{align}
Combining \eqref{43}, \eqref{44} and \eqref{45},
\begin{align*}
	\mathcal{N}_{2m}(x) = \frac{x^{2m}}{(\log x)^{2m}} (\log n + \gamma)^{2m} - \frac{x^{2m+1}}{e^{2m-1} (\log x)^{2m+1}} (\log n + \gamma)^{2m}
\end{align*}
\begin{align*}
	\hspace{100pt}+ \frac{x^{2m}}{e^{4m} (\log x)^{2m}} (\log n + \gamma)^{2m}+O\left(\frac{x^{2m}}{(\log x)^{2m+1}}\right)
\end{align*}
\begin{align}\label{46}
	\hspace{100pt}=- \frac{x^{2m+1}}{e^{2m-1} (\log x)^{2m+1}} (\log n + \gamma)^{2m}+O\left(\frac{x^{2m}}{(\log x)^{2m+1}}\right)
\end{align}
Important to assess that, the dominant error term in \(\mathcal{N}_{2m}(x)\) is,
\begin{align*}
	O\left( \frac{x^{2m}}{(\log x)^{2m+1}} \right)
\end{align*}

In conclusion, in both the cases, we can properly justify in this example that, \eqref{35} is definitely satisfied. Moreover, as for the sign of $\mathcal{N}_r (x)$, it can be duly noted that, the main term excluding the error term is indeed negative for sufficiently large values of $x$. Thus, in this scenario, one can safely conclude the following.
\begin{thm}\label{thm8}
	Given the weighted sum of Prime Counting Function over small intervals,
	\begin{align}\label{53}
		\mathcal{N}_{r}(x) := \left( \sum_{k=1}^{n} \pi(x/k) \right)^{r} - \frac{e x}{\log x} \left( \sum_{k=1}^{n} \pi(x/(ek)) \right)^{r} + \left( \sum_{k=1}^{n} \pi(x/(e^2k)) \right)^{r}\mbox{ ,  }n>1
	\end{align}
	where, $r>1$, the following holds true for sufficiently large values of $x$.
	\begin{align}\label{54}
		\mathcal{N}_{r}(x)\approx O\left(\frac{x^r}{(\log x)^{r+1}}\right) \mbox{ , }\hspace{10pt}\mbox{ and, }\hspace{5pt} \mathcal{N}_r (x)<0.
	\end{align}
\end{thm}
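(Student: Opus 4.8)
The plan is to collapse the whole statement into one asymptotic expansion: substitute the Prime Number Theorem estimate \eqref{6} into each of the three weighted sums, raise the resulting expressions to the $r$-th power, and then read off both the order of the remainder and the sign of the dominant term. Because $P$, $Q$ and $R$ are here all pure $r$-th powers of sums of the same shape, the computation is insensitive to the parity of $r$; I would therefore treat a general $r > 1$ in one pass rather than separating the odd and even cases, the two sub-computations above being formally identical with $r$ in place of $2m+1$ or $2m$.

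First I would fix $k \in \{1, \dots, n\}$ and combine \eqref{6} with $\log(x/k) = \log x - \log k = \log x\,(1 + O(1/\log x))$ to obtain the pointwise estimate
\[
\pi(x/k) = \frac{x}{k \log x} + O\!\left(\frac{x}{k \log^2 x}\right),
\]
the shift $\log k$ being absorbed into the remainder since $k \le n$ is fixed; the same reasoning gives the estimates for $\pi(x/(ek))$ and $\pi(x/(e^2 k))$ with the leading coefficient carrying an extra factor $1/e$ and $1/e^2$ respectively. Summing over $k$ and invoking the harmonic-number approximation \eqref{37}, $H_n = \log n + \gamma + o(1)$, I would reach
\[
\sum_{k=1}^{n} \pi(x/k) = \frac{x(\log n + \gamma)}{\log x} + O\!\left(\frac{x}{\log^2 x}\right),
\]
together with the two analogous identities in which the leading term acquires the factor $1/e$ (resp.\ $1/e^2$).

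Next I would raise each of the three sums to the $r$-th power. Writing each sum as (main term) $+\, O(x/\log^2 x)$ and expanding by the binomial theorem, the leading contribution is the $r$-th power of the main term, while every further term is smaller by at least one factor of $1/\log x$; this yields, for example,
\[
\left(\sum_{k=1}^{n} \pi(x/k)\right)^{r} = \frac{x^r (\log n + \gamma)^r}{(\log x)^r} + O\!\left(\frac{x^r}{(\log x)^{r+1}}\right),
\]
and the analogues for the other two sums with the appropriate powers of $e$. Multiplying the middle power by the prefactor $\frac{ex}{\log x}$ promotes its order to $x^{r+1}/(\log x)^{r+1}$, which strictly dominates the first and third terms, each of order only $x^r/(\log x)^r$. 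Assembling $\mathcal{N}_r(x)$ as in \eqref{53}, the first and third terms are thus swept into the remainder and I would be left with
\[
\mathcal{N}_r(x) = -\frac{x^{r+1}(\log n + \gamma)^r}{e^{r-1}(\log x)^{r+1}} + O\!\left(\frac{x^r}{(\log x)^{r+1}}\right),
\]
which exhibits the remainder of order $x^r/(\log x)^{r+1}$ recorded in \eqref{54}; and since $\log n + \gamma > 0$ for $n > 1$, the leading coefficient is negative, forcing $\mathcal{N}_r(x) < 0$ for all sufficiently large $x$.

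The step needing the most care is the passage to the $r$-th power: I must check that when a quantity of the form $A + O(B)$ with $A \asymp x/\log x$ and $B \asymp x/\log^2 x$ is raised to the $r$-th power, every binomial term beyond $A^r$ is genuinely $O(x^r/(\log x)^{r+1})$ uniformly in $x$. The content is that each successive term loses a factor $B/A = O(1/\log x)$, so the whole tail is controlled by $r\,A^{r-1}B = O(x^r/(\log x)^{r+1})$; since $r$ and $n$ are fixed, the implied constants (which depend on $r$ and on $\log n + \gamma$) stay bounded, and this is the only place where the uniformity has to be argued rather than merely asserted.
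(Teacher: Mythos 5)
Your proposal is correct and follows essentially the same route as the paper: substitute the estimate \eqref{6} into each sum, absorb the fixed shifts $\log k$ into the error, apply the harmonic-number approximation \eqref{37}, raise to the $r$-th power, and observe that the middle term, promoted by the prefactor $\frac{ex}{\log x}$ to the dominant order $\frac{x^{r+1}}{e^{r-1}(\log x)^{r+1}}(\log n+\gamma)^r$ with a negative sign, forces $\mathcal{N}_r(x)<0$ for large $x$. The only differences are cosmetic: you handle general $r>1$ in a single pass where the paper splits (unnecessarily) into the odd case $r=2m+1$ and the even case $r=2m$, and you even reproduce the paper's one loose step of sweeping the first and third terms, which are genuinely of order $\frac{x^r}{(\log x)^{r}}$, into a remainder written as $O\left(\frac{x^r}{(\log x)^{r+1}}\right)$ --- a harmless inaccuracy in both treatments, since those terms are still dominated by the negative middle term.
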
 
\begin{figure}[hbt!]
	\centering
	\includegraphics[width=0.9\linewidth]{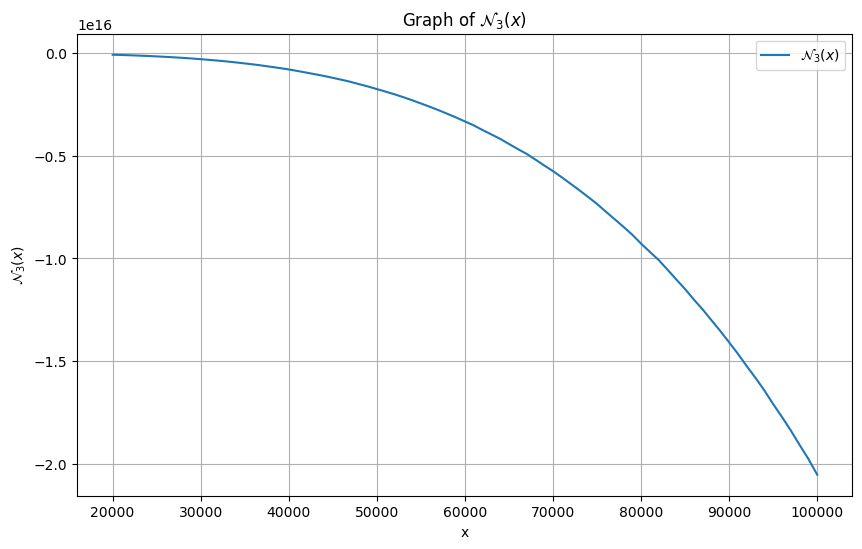}
	\caption{Graph of $\mathcal{N}_{3}(x)$}
	\label{fig5}
\end{figure}
\begin{figure}[hbt!]
	\centering
	\includegraphics[width=0.9\linewidth]{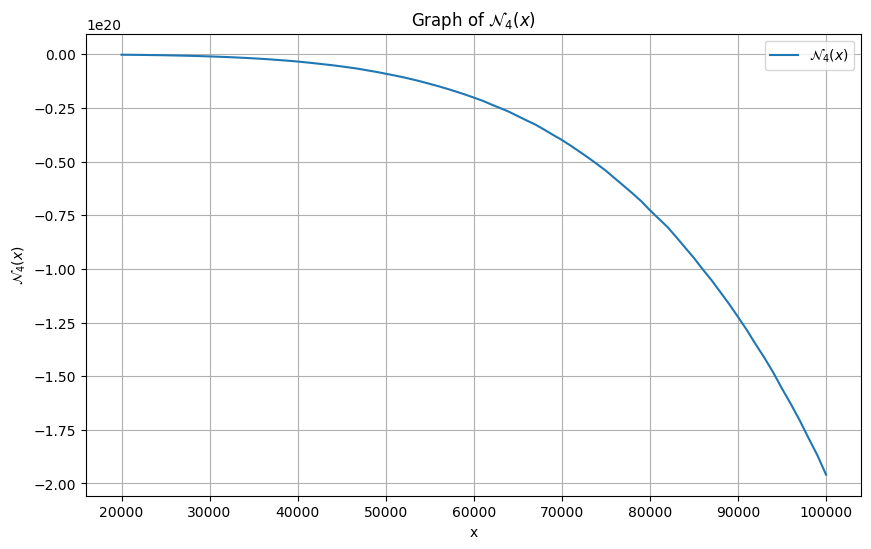}
	\caption{Graph of $\mathcal{N}_{4}(x)$}
	\label{fig6}
\end{figure}

\begin{table}[hbt!]
	\centering
	\begin{tabular}{|c|c|c|}
		\hline
		\rowcolor{gray}
		\textbf{\(x\)} & \textbf{\(\mathcal{N}_{3}(x)\)} & \textbf{\(\mathcal{N}_{4}(x)\)}\\ 
		\hline
		\(  \) & \(    \) & \(    \)\\ 
		
		\(10^4\)         & \(-6.204817261289663 \times 10^{12}\)         & \(-7.911694463952808 \times 10^{15}\)   \\ 
		
		\(10^5\)         & \(-2.0538877597403304 \times 10^{16}\)         & \(-1.9593096354084415 \times 10^{20}\)       \\ 
		
		\(10^6\)         & \(-8.54030555139954 \times 10^{19}\)       & \(-6.465704751724349 \times 10^{24}\)       \\ 
		
		\(10^7\)         & \(-4.1469160311751975 \times 10^{23}\)        & \(-2.597975844704281 \times 10^{29}\)       \\ 
		
		\(10^8\)         & \(-2.2502470326411468 \times 10^{27}\)        & \(-1.2022000181431568 \times 10^{34}\)       \\ 
		
		\(10^9\)         & \(-1.3249101964920937 \times 10^{31}\)       & \(-6.170254706864245 \times 10^{38}\)       \\ 
		
		\(10^{10}\)      & \(-8.304086276172884 \times 10^{34}\)        & \(-3.427910948552053 \times 10^{43}\)       \\ 
		
		\(10^{11}\)      & \(-5.4674077933205056 \times 10^{38}\)       & \(-2.026811001937711 \times 10^{48}\)       \\ 
		
		\(10^{12}\)      & \(-3.746002497341975 \times 10^{42}\)       & \(-1.260254434482889 \times 10^{53}\)      \\ 
		
		\(10^{13}\)      & \(-2.6523089311884873 \times 10^{46}\)        & \(-8.168086531604906 \times 10^{57}\)        \\ 
		
		\(10^{14}\)      & \(-1.930438588096488 \times 10^{50}\)       & \(-5.481394602239431 \times 10^{62}\)       \\ 
		
		\(10^{15}\)      & \(-1.4384149341267808 \times 10^{54}\)       & \(-3.7889284123142535 \times 10^{67}\)      \\
		\(  \) & \(    \) & \(    \)\\ 
		\hline
	\end{tabular}
	\caption{Values of \(\mathcal{N}_{3}(x)\mbox{ , }\mathcal{N}_{4}(x)\) for $10^4\leq x\leq 10^{15}$}
	\label{table 5}
\end{table}

\subsection{Numerical Estimates for $\mathcal{N}_{r}(x)$}
A priori with the help of \texttt{PYTHON} programming language \footref{foot1} we can indeed study the plot of $\mathcal{N}_{3}(x)$ ($m=1,r=3$) and $\mathcal{N}_{4}(x)$ ($m=2,r=4$) as compared to $x$ for the odd and even cases respectively. (N.B. These two are some special cases for chosen values of $m$, one can study the same if interested using any different values of $m$) Subsequently, \textbf{Figure \eqref{fig5}} \textbf{Figure \eqref{fig6}} represents the respective graphs for $2\times 10^4\leq x\leq 10^5$ and considering $n=5$.\par 
Furthermore, utilizing \texttt{MATHEMATICA} \footref{foot1}, it can be inferred from Table \eqref{table 5} that, $\mathcal{N}_{3}(x)$ and $\mathcal{N}_{4}(x)$ are strictly monotone \textit{decreasing}  while $x$ assumes values in the range, $10^4\leq x\leq 10^{15}$. As a result, it can surely be concluded that, $\mathcal{N}_{r}(x)<0$ for sufficiently large $x$, and for this particular example, i.e. for this particular choice of $P$, $Q$ and $R$.
\section{Furture Scope for Research}

It can definitely be said that, the results discussed in this article serves as a mere small version of what can be considered as a plethora of possibilities that future researchers can come up with. \par 
It is very much possible to eventually derive a similar estimate for polynomials of degree $\geq 5$ in $\pi(x)$, adopting similar techniques as devised in proposing the \textit{Cubic Polynomial Inequality} and the \textit{Higher Degree Polynomial Inequality}. It may very well help us to establish certain conjectures and even extend the study of primes even further.\par 
Regarding the estimates which we've derived for functions involving weighted sums of $\pi(x)$ over small intervals, it might be interesting to study them even further by varying indivudual weights of such funtions, and study their respective sign changes over increasing values of $x$.\par 
On the other hand, it is very much possible to produce absolutely new results from the general polynomial \eqref{34}, by choosing $P$, $Q$ and $R$ appropriately. As evident from the two examples which serves as generalizations of two of the inequalities which we've already discussed, it's fascinating to observe that, the sign changes of the polynomials are completely unpredictable. For example, we've deduced that, for higher values of $x$, the generalized cubic polynomial $\mathcal{H}_n (x)$ as defined in \eqref{55} yields negative values for $n=1$ (in this case, we denote $\mathcal{H}_1 (x)$ by $\mathcal{H}(x)$,  as defined in \eqref{64}), whereas, it's sign reverses drastically for the case, $n>1$ (cf. Theorem \eqref{thm9}). Important to mention that, different conclusions may very well be possible for different types of examples. Although each and every such result involves some robust calculations and numerical computations, but it eventually opens up a whole new area for future researchers to explore, which might end up in enlightening us to some significant discoveries in the relevant field of Number Theory.

\vspace{80pt}
\section*{Acknowledgments}
I'll always be grateful to \textbf{Prof. Adrian W. Dudek} ( Adjunct Associate Professor, Department of Mathematics and Physics, University of Queensland, Australia ) for inspiring me to work on this problem and pursue research in this topic. His leading publications in this area helped me immensely in detailed understanding of the essential concepts.

\section*{Statements and Declarations}
\subsection*{Conflicts of Interest Statement}
I as the author of this article declare no conflicts of interest.
\subsection*{Data Availability Statement}
I as the sole author of this article confirm that the data supporting the findings of this study are available within the article [and/or] its supplementary materials.

\bibliographystyle{elsarticle-num}

\begin{thebibliography}{00}
	
	\bibitem{1}  Ramanujan Aiyangar, Srinivasa, Berndt, Bruce C,
	\emph{ Ramanujan’s Notebooks: Part IV},
	New York: Springer-Verlag,
	1994. DOI: 10.1007/978-1-4612-0879-2
	\bibitem{2}  Dudek, Adrian W., Platt, David J.,
	\emph{ On Solving a Curious Inequality of
		Ramanujan, Experimental Mathematics},
	24:3, 289-294, DOI: 10.1080/10586458.2014.990118,
	2015.
	\bibitem{3} 
	Titchmarsh, E. C., \emph{The Theory of the Riemann Zeta-function}, Oxford University Press, 1951. Second edition revised by D. R. Heath-Brown, published by Oxford University Press, 1986. DOI: 10.2307/3608284
	\bibitem{6}  De, Subham,
	\emph{ "On proving an Inequality of Ramanujan using Explicit Order Estimates for the Mertens Function"},
	arXiv preprint arXiv:2407.12052 (2024) DOI: https://doi.org/10.48550/arXiv.2407.12052.
	\bibitem{7} 
	Schoenfeld, L., \emph{Sharper bounds for the Chebyshev Functions $\theta(x)$ and $\psi(x)$. II}, Mathematics of Computation, vol. 30, no. 134, pp. 337-360; DOI: 10.2307/2005976.
	\bibitem{8} 
	Ingham, A. E., \emph{The distribution of prime numbers}, Cambridge University Press, 1932. Reprinted by Stechert-Hafner, 1964, and (with a foreword by R. C. Vaughan) by Cambridge University Press, 1990. DOI: 10.1007/BF01742852
	\bibitem{15} 
	Hassani, Mehdi, \emph{Remarks on Ramanujan’s Inequality Concerning the Prime Counting Function}, Communications in Mathematics, Volume 29 (2021), Issue 3, Pages 473–482.
	DOI: 10.2478/cm-2021-0014
	\bibitem{16} 
	Hassani, Mehdi, \emph{Generalizations of an inequality of Ramanujan concerning prime counting
		function}, Appl. Math. E-Notes 13 (2013) 148-154.
	\bibitem{18}  Jameson, G.J.O.,
	\emph{The Prime Number Theorem},
	Cambridge ; New York : Cambridge University Press, 
	London Mathematical Society student texts,
	Vol. 53,
	2003. DOI: 10.1017/CBO9781139164986
	\bibitem{20} De, Subham,
	\emph{On the proof of the Prime Number Theorem using Order Estimates for the Chebyshev Theta Function},
	International Journal of Science and Research (IJSR), Vol. 12 Issue 11, Nov. 2023, pp. 1677-1691. DOI: https://doi.org/10.21275/SR231121153834
	\bibitem{22} 
	Hassani, Mehdi, \emph{“On an Inequality of Ramanujan Concerning the Prime Counting Function”}, Ramanujan Journal 28
	(2012), 435–442. DOI: 10.1007/s11139-011-9362-6
	\bibitem{23} 
	Ramanujan, S, \emph{“Collected Papers”}, Chelsea, New York, 1962.
	\bibitem{24} 
	Hardy, G. H., \emph{A formula of Ramanujan in the theory of primes}, 1. London Math. Soc. 12
	(1937), 94-98. DOI: 10.1112/jlms/s1-9.4.274
	\bibitem{25} 
	Hardy, G. H., \emph{Collected Papers}, vol. II, Clarendon Press, Oxford, 1967.
	\bibitem{26} 
	Axler, Christian, \emph{On Ramanujan's prime counting inequality}, arXiv preprint arXiv:2207.02486 (2022).
	\bibitem{27} 
	Mossinghoff, M. J., Trudgian, T. S., \emph{Nonnegative Trigonometric Polynomials and a Zero-Free
		Region for the Riemann Zeta-Function}, arXiv: 1410.3926,
	2014.

	
	
\end{thebibliography}

\newpage

\end{document}